\newtheorem{lema}{Lemma}[section]
\newtheorem{teo}[lema]{Theorem}
\newtheorem{propo}[lema]{Proposition}
\newtheorem{coro}[lema]{Corollary}
\newtheorem{defi}[lema]{Definition}
\newcommand{\bea}{\begin{eqnarray*}}
\newcommand{\eea}{\end{eqnarray*}}
\newcommand{\zz}[1]{}
\begin{document}

\thanks{$^{2}$Supported partially by FAPESP of Brazil, Grant 2011/22285-1, University of São Paulo, Brazil.}

\title{Homotopy of Braids on Surfaces: extending Goldsmith's Answer to Artin}
\author[Juliana Roberta Theodoro de Lima]{Juliana Roberta Theodoro de Lima $^{1}$ }
\address{Instituto de Matemática, Universidade Federal de Alagoas,\\ 
Campus A. C. Simões, Maceio, Lourival Melo Motta Avenue, no number\\
CEP: 57072-970
}
\email[Juliana \ R. \ Theodoro \ de \ Lima]{juliana.lima@im.ufal.br} 

\keywords{braid groups, homotopy groups, string links, generalized string links, presentation of  braids and string link groups} \maketitle

\begin{abstract}

In 1947, in the paper "Theory of Braids", Artin raised the question of whether isotopy and homotopy of braids on the disk coincide. Twenty seven years later, Goldsmith answered his question: she proved that in fact the group structures are different, exhibiting a group presentation and showing that the homotopy braid group on the disk is a proper quotient of the Artin braid group on the disk $B_{n}$, denoted by $\widehat{B}_{n}$. In this paper, we extend Goldsmith's answer to Artin for closed, connected and orientable surfaces different from the sphere. More specifically, we define the notion of homotopy generalized string links on surfaces, which form a group which is a proper quotient of the braid group on a surface $B_{n}(M)$, denoting it by $\widehat{{B}}_{n}(M)$. We then give a presentation of the group $\widehat{B}_{n}(M)$ and find that the Goldsmith presentation is a particular case of our main result, when we consider the surface $M$ to be the disk. We close with a brief discussion surrounding the importance of having such a fixed construction available in the literature.
\end{abstract}

\section{\bf{Introduction}}

\vspace{0.5cm}

In $1947$, E. Artin introduced the study of braids with his pioneering paper called \emph{Theory of Braids} (see \cite{Artin}), which is directly related to knots and links theory. Although braids, links and knots had already been discussed earlier, Artin showed two important results for the theory: the presentation and representation theorems for the braid group on the disk, namely $B_{n}$, also known as the Artin braid group. For our purposes here, we focus on the first result: a presentation of a group is a way to represent a group by generators and relations. The braid group is a group of equivalence classes, where the equivalence relation is isotopy (or, more formally, ambient isotopy). However, in the same paper Artin proposed the idea of homotopy braids: essentially, it is the same set divided into equivalence classes using the equivalence relation of homotopy. The operation (concatenation) remains the same among braids. Accordingly, he posed the following questions: would the homotopy braids on the disk have the same properties, group structure and presentation as braid groups? Otherwise, what are its differences?

Goldsmith \cite{Goldsmith} answered all these questions: in fact, she proved that the group structures are different, making it explicit when certain types of braids are not trivial up to isotopy but trivial up to homotopy. Furthermore, she provided a presentation for homotopy braid groups on the disk, denoted by $\widehat{B}_{n}$. Homotopy has been discussed since the beginning of the formalization of the studies of braid groups presented by Artin. However, homotopy braid theory was formalized by Milnor some years after Artin's seminal paper in \cite{Milnor} and it has been extended with the works \cite{Habegger, Levine, Yurasovskaya}. Moreover, there is still a slight difference between the concepts of string links given in \cite{Milnor, Yurasovskaya} and of homotopy braids given in \cite{Goldsmith}: string links are pure braids, either on the disk or on surfaces, with the monotonicity requirement relaxed, whereas homotopy braids are braids on the disk (not necessarily pure) with the monotonicity requirement relaxed. Consequentely, we see that the most recent works are restricted to the pure case and, therefore, it is reasonable to inquire about the general case. 

In order to keep our notation and terminology in line with the recent literature, we use the terms \emph{string links} instead of homotopy braids and up to \emph{link-homotopy} instead of homotopy. 

Among the significant number of interactions of braid and homotopy braid theory with other areas of low-dimensional topology and algebra, we would like to highlight the theory of orderable groups: we say that a group $G$ is left-orderable if there is a strict linear ordering $<$ of G such that $g<h$ implies $fg < fh$, for all $f,g, h \in G$. Similarly, we say that $G$ is bi-orderable if there is a strict linear ordering $<$ of G such that $g<h$ implies $fgs < fhs$, for all $f,g, h, s \in G$. Whether or not a group or its subgroups are left-orderable can carry important algebraic information about the group, such as torsion-freeness, absence of zero divisors in their group algebra (more specifically in $RB_{n}$, where $R$ is a ring without zero divisors), a solution for the word problem, faithfulness criteria for representations of braid groups and recent studies on detecting prime knots and links, which can be seen in more details in \cite{Rolfsen} and \cite{Rolfsen2}.

In 1994, Dehornoy described a left-ordering for $B_{n}$ \cite{Dehornoy}.  In $1999$, Rolfsen and Zhu described a bi-ordering for the pure braid group on the disk, namely $PB_{n}$ \cite{Rolfsen2}. Later, in $2002$, Gonzalez-Meneses proved that pure braid group on a closed, connected and orientable surface $M$ of genus $g \geq 1$, namely $PB_{n}(M)$, is bi-orderable \cite{Gonzales2}. On the other hand, it is not known if the braid groups on closed surfaces are left-orderable or not and it has been a much-discussed topic in the literature \cite{Rolfsen, Gonzales2}. In $2008$, Yurasovskaya  \cite{Yurasovskaya} proved that group of homotopy string links on the disk, namely $\widehat{PB}_{n}$, is bi-orderable. This last result was extended by Lima and Mattos \cite{Lima} for homotopy string links on closed, connected and orientable surfaces $M$ of genus $g \geq 1$, namely $\widehat{PB}_{n}(M)$. It is 
worth noting that all studies mentioned in this paragraph were successful in part due to availability of a finite presentation for the groups under consideration. Note that all the groups mentioned above have an orientable underlying surface (with or without boundary and different from the sphere): that is why we will focus on these types of surfaces in this paper.

This paper is organized as follows: in Section \ref{secao2} we state all the results that will be useful for our constructions. Section \ref{secao3} is divided into two subsections: firstly, we extend the definition of string links on the disk, string links on surfaces (pure) given in \cite{Habegger, Levine, Milnor, Yurasovskaya} for the general case (not pure), called {\it{generalized string links on surfaces}} (not necessarily orientable). Generalized string links on surfaces extend the definitions of string links and of homotopy braids, covering the need of being explicit to avoid misunderstandings. Furthermore, we extend the definitions of link-homotopy equivalence relation given in \cite{Habegger, Levine} for the general case (not pure) and provide a group structure (with concatenation operation) up to link homotopy called {\it{the group of homotopy generalized string links over surfaces}}, namely $\widehat{B}_{n}{(M)}$. We also show that $\widehat{PB}_{n}(M)$ given in \cite{Yurasovskaya} is a normal subgroup of $\widehat{B}_{n}{(M)}$ and we prove that $\widehat{B}_{n}{(M)}$ is a  proper quotient of $B_{n}(M)$. Secondly, we extend Goldsmith's paper by providing a presentation for homotopy generalized string links over surfaces. Goldsmith's presentation is a particular case of our presentation, when we consider the disk instead of surfaces. The study described in this paper will not only be a motivation, but also a tool to move forward in search for information about orderability of $B_{n}(M)$ and $\widehat{B}_{n}{(M)}$. We present a brief comment about this at the end of Section \ref{secao3}.

\vspace{2cm}

\section{\bf{Statements}\label{secao2}}

\subsection{\bf{Braids and string links over surfaces}\label{resumo2}}

\begin{defi}\label{definicaoproblema}\rm{\cite[p.431]{Gonzales}}
Let $M$ be a closed surface, not necessarily orientable, and let $\mathcal{P}= \{P_{1} , \ldots ,P_{n}\}$ be a set of $n$ distinct points of $M$. A geometric braid over $M$ based at $\mathcal{P}$ is an $n$-tuple $\gamma= ( \gamma_{1},\ldots,\gamma_{n})$ of paths, $\gamma_{i}: [0,1] \rightarrow M$, such that:
\begin{itemize}
\item[(1)] $\gamma_{i}(0) = P_{i}, \hbox{for all} \ i= 1,\ldots,n$,
\item[(2)] $\gamma_{i}(1) \in \mathcal{P}, \ \hbox{for all} \ i = 1,\ldots,n$,
\item[(3)] $\{\gamma_{1}(t),\ldots,\gamma_{n}(t)\} \;\hbox{are $n$ distinct points of $M$, for all} \ t \in [0,1]$.
\end{itemize}
For all $i= 1,\ldots, n$, we will call $\gamma_{i}$ the $i$-th strands (or strings) of $\gamma$.
\end{defi}

We say two geometric braids $\beta$ and $\alpha$ are {\it{isotopic}} if there exists an ambient isotopy which deforms one to the other, with endpoints fixed during the deformation process. The set of all equivalence classes of geometric braids on $n$-strands on the surface $M$ forms a group called the \emph{braid group on $n$ strings on a surface $M$}, namely $B_{n}(M)$, equipped with the operation (product) called concatenation. The inverse of each braid $\gamma$ is given by the mirror reflection of $\gamma$. If the surface is the disk $\mathbb{D}$, then $B_{n}(\mathbb{D})$ is the Artin braid group $B_{n}$.

\begin{figure}[h]
\center
\includegraphics[scale=0.3]{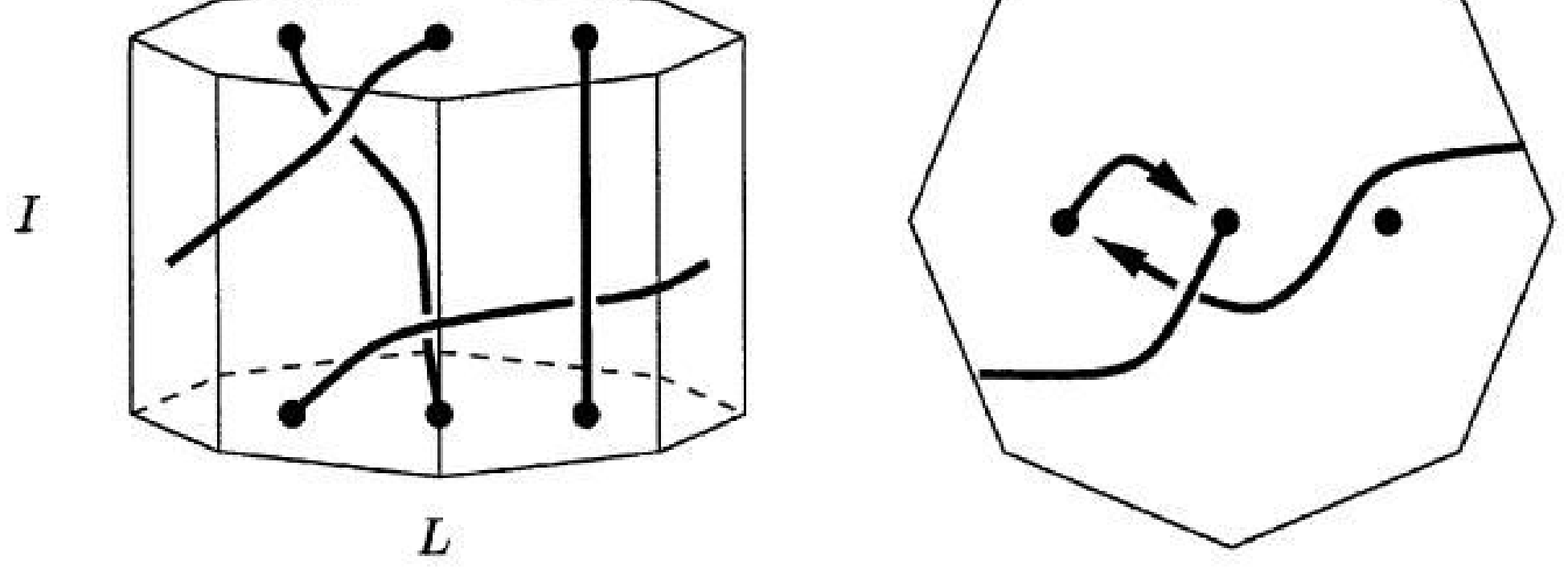}
\vspace{-6.2cm}
\caption{Different viewpoints of a braid on a surface \cite{Gonzales}.\label{FigGonza1}}
\end{figure}

\begin{teo}{\rm{\cite[Theorem 2.1]{Gonzales}}}\label{Teorema1.1.2}
If $M$ is a closed, orientable surface of genus $g \geq 1$, then $B_{n}(M)$ admits the following presentation:
\begin{itemize}
\item[] {\rm{\bf{Generators:}}} $\sigma_{1},\ldots, \sigma_{n-1}, \mathit{a}_{1,1},\ldots,\mathit{a}_{1,2g}$.
\item[] {\rm{\bf{Relations:}}}
\begin{itemize}
\item[{\rm(R1)}] $\sigma_{i}\sigma_{j}= \sigma_{j}\sigma_{i} \hspace{10.1cm} |i-j|\geq 2$;
\item[{\rm(R2)}] $\sigma_{i}\sigma_{i+1}\sigma_{i} = \sigma_{i+1}\sigma_{i}\sigma_{i+1} \hspace{7.8cm} 1\leq i \leq n-2$;
\item[{\rm(R3)}] $\mathit{a}_{1,1}\cdots\mathit{a}_{1,2g}\mathit{a}_{1,1}^{-1}\cdots\mathit{a}_{1,2g}^{-1}=\sigma_{1}\cdots\sigma_{n-2}\sigma_{n-1}^{2}\sigma_{n-2}\cdots\sigma_{1}$
\item[{\rm(R4)}] $\mathit{a}_{1,r}A_{2, s}= A_{2, s}\mathit{a}_{1,r} \hspace{5.0cm} 1\leq r\leq 2g; \; 1\leq s\leq 2g-1;  \ r\neq s$;
\item[{\rm(R5)}] $(\mathit{a}_{1,1}\cdots\mathit{a}_{1,r})A_{2, r}=
\sigma_{1}^{2}A_{2, r}(\mathit{a}_{1,1}\cdots\mathit{a}_{1,r}) \hspace{4.9cm} 1\leq r \leq 2g-1$;
\item[{\rm(R6)}] $\mathit{a}_{1,r}\sigma_{i}= \sigma_{i}\mathit{a}_{1,r} \hspace{8.5cm} 1\leq r \leq 2g; \  i\geq 2$.
\end{itemize}
where:\\  $$A_{2,r}= \sigma_{1}^{-1}(a_{1,1}\cdots a_{1,r-1}a_{1,r+1}^{-1}\cdots a_{1,2g}^{-1})\sigma_{1}^{-1}.$$
\end{itemize}
\end{teo}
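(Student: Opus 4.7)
The plan is to establish the presentation by choosing geometrically natural generators and then using the Fadell--Neuwirth fibration to organize both the generation argument and the completeness argument.

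For the generating set, the $\sigma_i$ are the usual Artin half-twists on a small disk containing $P_i$ and $P_{i+1}$, while $a_{1,r}$ is the braid whose first strand traverses the $r$-th standard generator of $\pi_1(M,P_1)$ with all other strands held constant. That these suffice follows by induction on $n$ using the short exact sequence
$$1 \to \pi_1\bigl(M \setminus \{P_1,\ldots,P_{n-1}\},\, P_n\bigr) \to P_n(M) \to P_{n-1}(M) \to 1$$
combined with the surjection $B_n(M) \twoheadrightarrow S_n$ realized by the $\sigma_i$: the fiber is a free group whose generators are conjugates (by products of $\sigma_i$) of the $a_{1,r}$, and the base is handled inductively.

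Each relation can then be verified geometrically. Relations (R1) and (R2) are the classical Artin braid relations, supported on a disk. Relation (R6) is immediate because $\sigma_i$ for $i \geq 2$ is supported away from the first strand. Relations (R4) and (R5) record the conjugation action of $a_{1,r}$ on $A_{2,s}$ and distinguish the case $r=s$ (where the two handle-loops actually link) from $r \neq s$ (where they do not). Relation (R3) is the key \emph{surface relation}: the word $a_{1,1}\cdots a_{1,2g}a_{1,1}^{-1}\cdots a_{1,2g}^{-1}$ bounds the $4g$-gon disk used to present $M$; carried out in the punctured surface $M \setminus \{P_2,\ldots,P_n\}$, this disk now contains the punctures $P_2,\ldots,P_n$, and pulling it off these punctures produces exactly the braid $\sigma_1\cdots\sigma_{n-2}\sigma_{n-1}^{2}\sigma_{n-2}\cdots\sigma_1$.

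The hard part is completeness. Let $G_n$ denote the group abstractly defined by the presentation; the plan is to exhibit an inverse of the natural surjection $G_n \twoheadrightarrow B_n(M)$ by induction on $n$. Using the standard presentation-of-an-extension lemma applied to the Fadell--Neuwirth sequence, a presentation of $B_n(M)$ can be assembled from a free presentation of $\pi_1(M \setminus \{P_1,\ldots,P_{n-1}\})$ (of rank $2g+n-2$), the inductive presentation of $B_{n-1}(M)$, and conjugation relations recording the action of the base on the fiber. The main effort is then a Tietze reduction: every loop around a removed puncture and every inductive generator must be rewritten in terms of the preferred generators $\sigma_i$ and $a_{1,r}$, which are concentrated on the first strand, so that the conjugation relations collapse precisely to (R4)--(R6) and the single surface relator in the fiber produces exactly (R3). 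Keeping track of this rewriting cleanly, and verifying that no spurious relators survive, is the step I expect to be the principal obstacle.
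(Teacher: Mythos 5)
This statement is not proved in the paper at all: it is quoted verbatim from Gonz\'alez--Meneses \cite[Theorem 2.1]{Gonzales} as background, so there is no internal proof to compare yours against. Measured instead against the actual proof in that reference, your outline follows the same broad strategy --- geometrically natural generators $\sigma_i$ and $a_{1,r}$, the Fadell--Neuwirth sequence for the pure braid group together with the surjection onto $\Sigma_n$, the presentation-of-an-extension lemma (which is exactly Proposition \ref{Prop1.3.1} of this paper), and a final Tietze reduction to the small generating set. So the route is the right one and matches the literature.

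The difficulty is that, as written, your proposal is a plan rather than a proof. The entire content of the theorem is the completeness claim --- that (R1)--(R6) generate \emph{all} relations --- and you explicitly defer the step that establishes it (``the principal obstacle''), namely the rewriting of the fiber generators and puncture loops in terms of $\sigma_i$ and $a_{1,r}$ and the verification that the conjugation relations and the pure-braid relations collapse to exactly (R3)--(R6) with nothing left over. Until that bookkeeping is carried out (it occupies most of Gonz\'alez--Meneses's argument, and the analogous bookkeeping for the link-homotopy quotient occupies the proof of Theorem \ref{Teorema4.4.1} here), the argument does not yet establish the presentation. One smaller imprecision: you speak of ``the single surface relator in the fiber,'' but the fiber $\pi_1(M\setminus\{P_1,\ldots,P_{n-1}\},P_n)$ is free and has no relator; relation (R3) arises from the relation in the \emph{pure braid group of the closed surface} expressing the boundary word of the $4g$-gon as the product of loops around the punctures (the analogue of (PR1) in Theorem \ref{Teorema2.1.5}), not from a relator of the free fiber itself.
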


\vspace{0.5cm}

\begin{figure}[h]
\center
\includegraphics[scale=0.4]{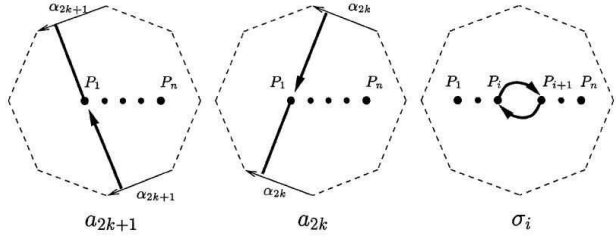}
\vspace{-8.8cm}
\caption{The generators of $B_{n}(M)$, where $a_{r}=a_{1,r}$ \cite{Gonzales}.\label{Fig1.3}}
\end{figure}

\vspace{2cm}

\begin{defi}\rm{\cite[p.12]{Yurasovskaya}}\label{def_stringlink}
Let $M$ be a compact orientable surface of genus $g \geq 1$. Choose $n$ points $\mathcal{P}= \{P_{1},\ldots, P_{n}\}$ to lie in the interior of $M$. Let $I_{1},\ldots,I_{n}$ be $n$ copies of the  unit interval $I=[0,1]$ and $\coprod_{i=1}^{n}{I_{i}}$ denote the disjoint union of these intervals. A string link $\sigma$ on $n$-strands on the surface $M$ is a smooth or piecewise linear proper embedding $\sigma: \coprod_{i=1}^{n}{I_{i}} \rightarrow M \times I$ such that $\sigma_{|_{I_{i}(0)}}= (P_{i},0)$ and $\sigma_{|_{I_{i}(1)}}= (P_{i},1)$.
\end{defi}

Observe that a string link is pure, i.e., it induces the trivial permutation on its strands.

\begin{figure}[h]
\center
\includegraphics[scale=0.25]{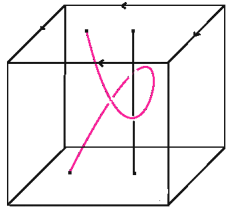}
\vspace{-0.3cm}
\caption{A string link $\sigma$ on the $2$-dimensional torus. \label{Fig2.1}}
\end{figure}

\begin{defi}\cite{Habegger}\label{linkhomotopiaH}
We say that two string links $\sigma$ and $\sigma^{\prime}$ are \emph{link-homotopic} if there is a homotopy of the strands in $M \times I$, fixing $M \times \{0,1\}$ and deforming $\sigma$ to $\sigma^{\prime}$, such that the images of different strands remain disjoint during the deformation.
\end{defi}

During the deformation, each individual strand is allowed to pass through itself but not through the others.

We say that link-homotopy is an equivalence relation on string links, which is generated by a sequence of ambient isotopies of $M \times I$ fixing $M \times \{0,1\}$, and local {\it{crossing changes}} of arcs from the same strand of a string link. This alternative definition of link-homotopy can be found in \cite{Milnor, Levine, Habegger} and it gives us a geometric intuition about link-homotopy deformation. 

\begin{figure}[h]
\center
\includegraphics[scale=1.5]{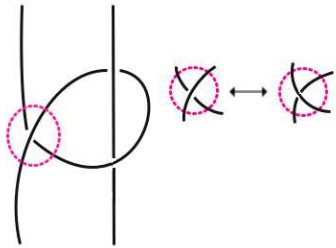}
\vspace{-0.2cm}
\caption{A crossing change of an arc from the same strand. \label{Fig2.1.1}}
\end{figure}

The set of all equivalence classes of string links on $n$-strands on the surface $M$ forms a group called the \emph{group of homotopy string links on $n$ strings on a surface $M$}, denoted by $\widehat{PB}_{n}(M)$. Again, the operation is concatenation and the inverse of each string link $\gamma$ is given by the mirror reflection of $\gamma$. 

Rolfsen and Fenn proved that every string link is link-homotopic to a pure braid (a proof can be found in \cite[Theorem $3.7$]{Yurasovskaya}). Also, Yurasovskaya proved in \cite[Lemma $3.8$, Proposition $3.9$]{Yurasovskaya} that the link-homotopically trivial braids $H_{n}(M)$ form a normal subgroup of $PB_{n}(M)$ and $\widehat{PB}_{n}(M)$ is a proper quotient of $PB_{n}(M)$ by $H_{(n)}(M)$, i.e., $\widehat{PB}_{n}(M) \simeq {PB_{n}(M)}/{H_{n}(M)}$. Furthermore,  $H_{n}(M)$ can be seen as the smallest normal subgroup of $PB_{n}{(M)}$ generated by $[t_{i,j},t_{i,j}^{h}]$, where 

\begin{eqnarray*}
t_{i,j} &= & \sigma_{i}\sigma_{i+1}\cdots\sigma_{j-2}\sigma_{j-1}^{2}\sigma_{j-2}^{-1}\cdots\sigma_{i+1}^{-1}\sigma_{i}^{-1},\\
t_{i,j}^{h} &=& ht_{i,j}h^{-1}, h \in \mathbb{F}(2g + n- i),
\end{eqnarray*} and $\mathbb{F}(2g + n- i)$ is the free group on $2g+n-i$ generators  $\pi_{1}(M\setminus \mathcal{P}_{n-i}; P_{i})$ generated by $\{\{a_{i,r}\} \cup \{t_{i,j}\}; i+1 \leq j \leq n, 1 \leq r \leq 2g \}$, with $\mathcal{P}_{n-i}= \{P_{i+1}, \ldots, P_{n}\}$ and $M$ is a closed, connected and orientable surface of genus $g \geq 1$ (see \cite[Proposition $4.5$]{Yurasovskaya}). In symbols: $$H_{n}(M)= <\{[t_{i,j},t_{i,j}^{h}]; \ 1 \leq i < j \leq n, h \in \mathbb{F}(2g + n- i)\}>^{N},$$ where $<>^{N}$ denotes the normal closure. The representations of $a_{i,r}$ and $t_{i,j}$ are given in Figure $\ref{disney3}$.

\begin{figure}[h]
\center
\includegraphics[scale=0.5]{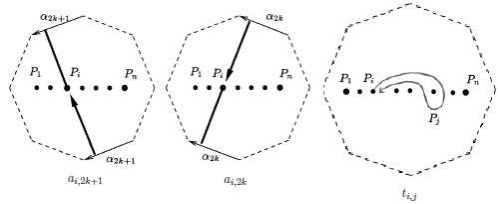}
\vspace{-0.3cm}
\caption{The braids $a_{i,r}$ and $t_{i,j}$ \cite{Yurasovskaya}.\label{disney3}}
\end{figure} 


A representation of a relation of $H_{n}(M)$ is given in Figure $\ref{Fig2.3}$.\\

\begin{figure}[h]
\center
\includegraphics[scale=0.55]{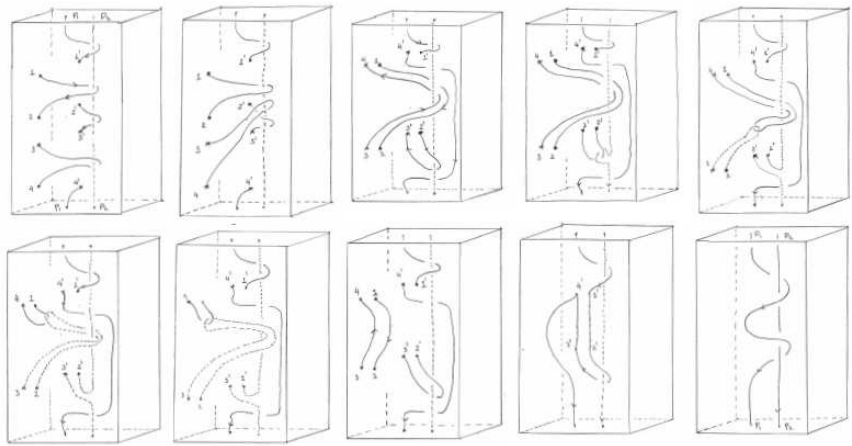}
\vspace{-0.2cm}
\caption{Particular case of $[t_{i,j}, t_{i,j}^{h}]$ up to link-homotopy \cite{Yurasovskaya}.\label{Fig2.3}}
\end{figure} 


\vspace{7.5cm}
Under the notations above, we have the following:

\begin{teo}{\rm{\cite[Theorem 6.3]{Yurasovskaya}}}\label{Teorema2.1.5}
Let $M$ be a closed, compact, connected and orientable surface of genus $g \geq 1$. The group of homotopy string links $\widehat{PB}_{n}(M)$ admits the following presentation:
\begin{itemize}
\item[]{\bf{Generators:}} $\{a_{i,r}; \;\;1\leq i \leq n; \;\; 1 \leq r \leq 2g\}\cup \{t_{j,k};\;\; 1 \leq j < k \leq n\}.$
\item[]{\bf{Relations:}}
\begin{itemize}\small{
\item[{\rm{(LH1)}}] $[t_{i,j},t_{i,j}^{h}]= 1$ \hspace{9.1cm} $h \in \mathbb{F}(2g + n- i)$;
\item[{\rm(PR1)}] $a_{n, 1}^{-1}a_{n, 2}^{-1}\cdots a_{n, 2g}^{-1}a_{n, 1}a_{n, 2}\cdots a_{n, 2g}= \displaystyle\prod_{i= 1}^{n-1}{T_{i, n-1}^{-1}T_{i, n}}$;
\item[{\rm(PR2)}] $a_{i, r}A_{j, s}= A_{j, s}a_{i, r} \hspace{3.6cm} 1\leq i<j \leq n, \; 1 \leq r \leq 2g;\; 1 \leq s \leq 2g-1; \;\; r\neq s$;
\item[{\rm(PR3)}] $(a_{i, 1}\cdots a_{i, r})A_{j, r}(a_{i, r}^{-1}\cdots a_{i, 1}^{-1})A_{j, r}^{-1}= T_{i, j}T_{i, j-1}^{-1}$ $\hspace{2.2cm} 1\leq i<j \leq n, \;\; 1\leq r \leq 2g-1$;
\item[{\rm(PR4)}] $T_{i, j}T_{k, l}= T_{k, l}T_{i, j} \hspace{3.9cm} 1\leq i<j<k<l \leq n \;\; \text{or} \;\; 1\leq i<k<l\leq j\leq n$;
\item[{\rm(PR5)}] $T_{k, l}T_{i, j}T_{k, l}^{-1}= T_{i, k-1}T_{i, k}^{-1}T_{i, j}T_{i, l}^{-1}T_{i, k}T_{i, k-1}^{-1}T_{i, l}$ $\hspace{3.6cm} 1\leq i<k \leq j<l \leq n$;
\item[{\rm(PR6)}] $a_{i, r}T_{j, k}= T_{j, k}a_{i, r}  \hspace{3cm} 1\leq i<j<k \leq n \;\; \text{or} \;\; 1\leq j<k<i \leq n, \;\; 1\leq r \leq 2g $;
\item[{\rm(PR7)}] $a_{i, r}(a_{j, 2g}^{-1}\cdots a_{j, 1}^{-1}T_{j, k}a_{j, 2g}\cdots a_{j, 1}) = (a_{j, 2g}^{-1}\cdots a_{j, 1}^{-1}T_{j, k}a_{j, 2g}\cdots a_{j, 1})a_{i, r} \hspace{0.8cm} 1\leq j<i \leq k \leq n$;
\item[{\rm(PR8)}] $T_{j,n}= \left(\displaystyle\prod_{i= 1}^{j-1}{a_{i, 2g}^{-1}\cdots a_{i, 1}^{-1}T_{i, j-1}T_{i, j}^{-1}a_{i, 1}\cdots a_{i, 2g}}\right)a_{j, 1}\cdots a_{j, 2g}a_{j, 1}^{-1}\cdots a_{j, 2g}^{-1}$;}
\end{itemize}
where:\\ 
\begin{align*}
A_{j, s} &= a_{j, 1}\cdots a_{j, s-1}a_{j, s+1}^{-1}\cdots a_{j, 2g}^{-1},\\
T_{i,j}   &=  t_{i,j}\cdots t_{i,i+1}.
\end{align*}
\end{itemize}
\end{teo}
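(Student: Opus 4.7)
The strategy is to combine the quotient isomorphism $\widehat{PB}_n(M)\simeq PB_n(M)/H_n(M)$ recalled just above the statement with an explicit presentation of $PB_n(M)$ on the same generating set $\{a_{i,r}\}\cup\{t_{j,k}\}$. Once such a presentation is in hand, the family \textrm{(LH1)} accounts exactly for killing $H_n(M)$, since the latter is described as the normal closure of $\{[t_{i,j},t_{i,j}^h]\}_{h\in\mathbb{F}(2g+n-i)}$; the relations \textrm{(PR1)--(PR8)} must therefore constitute a presentation of $PB_n(M)$ in these generators, and adding \textrm{(LH1)} produces $\widehat{PB}_n(M)$.

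The first step is to derive a presentation of $PB_n(M)$ by induction on $n$ using the Fadell--Neuwirth short exact sequence
$$1\longrightarrow \pi_1\bigl(M\setminus\mathcal{P}_{n-1};P_n\bigr)\longrightarrow PB_n(M)\longrightarrow PB_{n-1}(M)\longrightarrow 1,$$
whose kernel is free on $\{a_{n,r}\}_{r}\cup\{t_{j,n}\}_{j<n}$ modulo a single surface relation. The standard extension-presentation recipe then produces three packages of relations: the inductive relations of the quotient (which recover the instances of \textrm{(PR4)--(PR5)} whose largest index is less than $n$); conjugation-action relations describing how each generator of $PB_{n-1}(M)$ acts on the kernel, which unfold into \textrm{(PR2)--(PR3)} and \textrm{(PR6)--(PR7)} via the geometric picture of the $a_{i,r}$ and $t_{j,k}$ in Figure~\ref{disney3}; and a compatibility relation \textrm{(PR8)} that writes $T_{j,n}$ as a word in lower-level generators so the inductive step closes correctly. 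The base $n=1$ is the standard presentation of $\pi_1(M)$ on the $a_{1,r}$, with single defining relation \textrm{(PR1)}.

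The second step is to pass to the quotient: since $H_n(M)$ is normally generated by $\{[t_{i,j},t_{i,j}^h]\}$, adjoining the family \textrm{(LH1)} to the presentation of $PB_n(M)$ yields a presentation of $\widehat{PB}_n(M)$. A small bookkeeping point is that every $h\in\mathbb{F}(2g+n-i)$ is a word in $\{a_{i,r}\}_{r}\cup\{t_{i,k}\}_{k>i}$, so that \textrm{(LH1)} is indeed expressible in the stated generating set; this is immediate because these are precisely the free generators of $\pi_1(M\setminus\mathcal{P}_{n-i};P_i)$.

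The main obstacle is verifying that the conjugation-action relations extracted from the fibration reduce cleanly to \textrm{(PR2)--(PR7)} without hidden additional relations, and that \textrm{(PR8)} is genuinely forced rather than redundant. The most practical route is to perform these actions explicitly by tracking loop-dragging pictures of the $a_{i,r}$ and $t_{j,k}$ around punctures; alternatively, one can apply the Reidemeister--Schreier method to the surjection $B_n(M)\to S_n$ obtained from the presentation of $B_n(M)$ in Theorem~\ref{Teorema1.1.2}, whose kernel is $PB_n(M)$, and then match the rewritten relations one-by-one against \textrm{(PR1)--(PR8)}.
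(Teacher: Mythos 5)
The paper does not prove this statement: Theorem~\ref{Teorema2.1.5} is quoted verbatim from Yurasovskaya's thesis (her Theorem~6.3) and is used here as a black box, so there is no internal proof to compare against. That said, your reconstruction is correct and is essentially the canonical route. The three inputs you rely on --- (i) $\widehat{PB}_n(M)\simeq PB_n(M)/H_n(M)$, (ii) $H_n(M)$ being the normal closure of the elements $[t_{i,j},t_{i,j}^{h}]$, and (iii) the fact that \textrm{(PR1)--(PR8)} present $PB_n(M)$ on the generators $\{a_{i,r}\}\cup\{t_{j,k}\}$ --- are respectively recalled in the paper (from Yurasovskaya's Propositions 3.9 and 4.5) and established by Gonz\'alez-Meneses, and von Dyck's lemma then gives the presentation of the quotient exactly as you say; your bookkeeping remark that each $h\in\mathbb{F}(2g+n-i)$ is a word in the stated generators is the right point to check. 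The only place where you are doing real work rather than citing is step (iii), and your Fadell--Neuwirth induction sketch is the correct mechanism for it, though as you note the verification that the conjugation action yields precisely \textrm{(PR2)--(PR7)} with \textrm{(PR8)} as the closing compatibility relation is a substantial computation (it occupies most of Gonz\'alez-Meneses's paper). One small caveat: Yurasovskaya's own derivation of the punctured-surface analogue (Theorem~\ref{Teorema2.6}) runs the induction directly on the homotopy string link groups, with the reduced free group as kernel, rather than presenting $PB_n$ first and then quotienting; both routes are valid, and yours has the advantage of isolating the link-homotopy input into the single final step.
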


\vspace{0.5cm}

\begin{teo}{\rm{\cite[Theorem 5.1]{Yurasovskaya}}}\label{Teorema2.6}
Let $S$ be the surface obtained by deleting a single point from a closed, compact, connected and orientable surface of genus $g \geq 1$. The group of homotopy string links $\widehat{PB}_{n}(S)$ admits the following presentation:
\begin{itemize}
\item[]{\bf{Generators:}} $\{a_{i,r}; \;\;1\leq i \leq n; \;\; 1 \leq r \leq 2g\}\cup \{t_{j,k};\;\; 1 \leq j < k \leq n\}.$
\item[]{\bf{Relations:}}
\begin{itemize}\small{
\item[{\rm{(LH1)}}] $[t_{i,j},t_{i,j}^{h}]= 1$ \hspace{9.1cm} $h \in \mathbb{F}(2g + n- i)$;
\item[{\rm(PR2)}] $a_{i, r}A_{j, s}= A_{j, s}a_{i, r} \hspace{3.6cm} 1\leq i<j \leq n, \; 1 \leq r \leq 2g;\; 1 \leq s \leq 2g-1; \;\; r\neq s$;
\item[{\rm(PR3)}] $(a_{i, 1}\cdots a_{i, r})A_{j, r}(a_{i, r}^{-1}\cdots a_{i, 1}^{-1})A_{j, r}^{-1}= T_{i, j}T_{i, j-1}^{-1}$ $\hspace{2.2cm} 1\leq i<j \leq n, \;\; 1\leq r \leq 2g-1$;
\item[{\rm(PR4)}] $T_{i, j}T_{k, l}= T_{k, l}T_{i, j} \hspace{3.9cm} 1\leq i<j<k<l \leq n \;\; \text{or} \;\; 1\leq i<k<l\leq j\leq n$;
\item[{\rm(PR5)}] $T_{k, l}T_{i, j}T_{k, l}^{-1}= T_{i, k-1}T_{i, k}^{-1}T_{i, j}T_{i, l}^{-1}T_{i, k}T_{i, k-1}^{-1}T_{i, l}$ $\hspace{3.6cm} 1\leq i<k \leq j<l \leq n$;
\item[{\rm(PR6)}] $a_{i, r}T_{j, k}= T_{j, k}a_{i, r}  \hspace{3cm} 1\leq i<j<k \leq n \;\; \text{or} \;\; 1\leq j<k<i \leq n, \;\; 1\leq r \leq 2g $;
\item[{\rm(PR7)}] $a_{i, r}(a_{j, 2g}^{-1}\cdots a_{j, 1}^{-1}T_{j, k}a_{j, 2g}\cdots a_{j, 1}) = (a_{j, 2g}^{-1}\cdots a_{j, 1}^{-1}T_{j, k}a_{j, 2g}\cdots a_{j, 1})a_{i, r} \hspace{0.8cm} 1\leq j<i \leq k \leq n$;
\item[{\rm(PR8)}] $T_{j,n}= \left(\displaystyle\prod_{i= 1}^{j-1}{a_{i, 2g}^{-1}\cdots a_{i, 1}^{-1}T_{i, j-1}T_{i, j}^{-1}a_{i, 1}\cdots a_{i, 2g}}\right)a_{j, 1}\cdots a_{j, 2g}a_{j, 1}^{-1}\cdots a_{j, 2g}^{-1}$;}
\end{itemize}
where:\\ 
\begin{align*}
A_{j, s} &= a_{j, 1}\cdots a_{j, s-1}a_{j, s+1}^{-1}\cdots a_{j, 2g}^{-1},\\
T_{i,j}   &=  t_{i,j}\cdots t_{i,i+1}.
\end{align*}
\end{itemize}
\end{teo}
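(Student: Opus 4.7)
The plan is to follow the same strategy as the proof of Theorem~\ref{Teorema2.1.5} and track where the topology of the closed surface $M$ actually enters, so that the same argument can be rerun with $M$ replaced by $S=M\setminus\{*\}$. The key observation is that relation (PR1) of Theorem~\ref{Teorema2.1.5} is the image inside $\widehat{PB}_{n}(M)$ of the single defining relation $a_{1}\cdots a_{2g}a_{1}^{-1}\cdots a_{2g}^{-1}=1$ of $\pi_{1}(M)$ (compare its left-hand side with the right-hand side of (R3) of Theorem~\ref{Teorema1.1.2}). Since $S$ deformation retracts onto a wedge of $2g$ circles, $\pi_{1}(S)=\mathbb{F}(2g)$ is free on $\{a_{1,1},\ldots,a_{1,2g}\}$, so (PR1) is expected to disappear while every other relation, being local in nature, must survive.

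I would proceed by induction on $n$. The base case $n=1$ is immediate, since $\widehat{PB}_{1}(S)=PB_{1}(S)=\pi_{1}(S)=\mathbb{F}(2g)$, which matches the claimed presentation because all listed relations are vacuous for $n=1$. For the inductive step I would invoke the Fadell--Neuwirth fibration of configuration spaces obtained by forgetting the last point,
\begin{equation*}
S\setminus\mathcal{P}_{n-1}\hookrightarrow F_{n}(S)\longrightarrow F_{n-1}(S),
\end{equation*}
which, since configuration spaces of surfaces of negative Euler characteristic are aspherical, yields a short exact sequence
\begin{equation*}
1\longrightarrow \mathbb{F}(2g+n-1)\longrightarrow PB_{n}(S)\longrightarrow PB_{n-1}(S)\longrightarrow 1.
\end{equation*}
Passing to link-homotopy via \cite[Theorem 3.7, Lemma 3.8, Propositions 3.9 and 4.5]{Yurasovskaya}, one obtains the corresponding short exact sequence for $\widehat{PB}_{n}(S)$, and the inductive hypothesis can then be combined with the standard machinery for presenting a group extension, exactly as in \cite[Section 6]{Yurasovskaya}.

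The verification that each of (LH1) and (PR2)--(PR8) holds in $\widehat{PB}_{n}(S)$ is a local computation in $S\times I$ that reproduces the arguments of \cite[Section 6]{Yurasovskaya} verbatim, because every such relation is realized inside a disk neighbourhood of the $1$-skeleton of $M$ chosen to avoid the puncture $\{*\}$. The main obstacle I expect is the sufficiency direction: one must show that the Reidemeister--Schreier derivation used in the closed case produces no spurious relation and misses no necessary one in the punctured case. This reduces to checking that (PR1) is the \emph{only} relation contributed by $\pi_{1}(M)$ in Yurasovskaya's derivation, and that the conjugation action of the quotient factor on the fiber group, which governs (PR4)--(PR8), depends only on local data near the strands and is therefore insensitive to the presence or absence of the puncture.
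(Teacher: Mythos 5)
This statement is not proved in the paper at all: it is quoted verbatim as background from \cite[Theorem 5.1]{Yurasovskaya}, so there is no in-paper argument to compare yours against. Measured against the proof in that source, your sketch assembles the right ingredients (the Fadell--Neuwirth fibration, induction on $n$ through the extension-presentation machinery of Proposition \ref{Prop1.3.1}, and the observation that the surface relation responsible for {\rm(PR1)} is exactly what the puncture kills), but it has the logical dependency backwards. In \cite{Yurasovskaya} the punctured case is the \emph{primary} theorem: it is proved first, precisely because the forgetful map $F_{n}(S)\to F_{n-1}(S)$ admits a cross-section (one can push the forgotten point toward the puncture) and its fiber $S\setminus\mathcal{P}_{n-1}$ has genuinely free fundamental group, so the extension splits and the presentation can be assembled inductively; the closed case, Theorem \ref{Teorema2.1.5}, is then \emph{derived} from the punctured one by filling in the puncture and adjoining {\rm(PR1)}. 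Proposing to ``rerun the proof of the closed case with $M$ replaced by $S$'' therefore inverts the order in which the two results are actually established, and taken literally it would be circular.

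The more substantive gap is the sentence ``Passing to link-homotopy \ldots one obtains the corresponding short exact sequence for $\widehat{PB}_{n}(S)$.'' This is the heart of the matter, not a formality: after quotienting by link-homotopy the kernel is no longer the free group $\mathbb{F}(2g+n-1)$ but its reduced quotient (the free group modulo the relations $[x,x^{h}]=1$), which is exactly where {\rm(LH1)} enters the presentation, and proving that the sequence remains exact and split at the level of link-homotopy classes is the main technical content of the cited chapters (a Habegger--Lin style argument). Your proposal defers this entirely to the quoted lemmas without indicating why exactness survives the quotient, so as written it is a plan for a proof rather than a proof. The remaining points --- the base case $\widehat{PB}_{1}(S)=\mathbb{F}(2g)$ and the locality of {\rm(PR2)}--{\rm(PR8)} away from the puncture --- are fine.
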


%
%

\vspace{1cm}

\subsection{\bf{A method for finding presentations of groups}}

\begin{propo}{\rm\cite[Proposition $1$, pp.$138$--$140$]{Johnson}}\label{Prop1.3.1}
Consider the following short exact sequence of groups and homomorphisms:
\begin{eqnarray*}\label{sequencia de grupos}\xymatrix{
1  \ar[r]      &      A      \ar[r]^{i}       &      \widetilde{G}    \ar[r]^{p}       &     G   \ar[r]      &       1,\\
}\end{eqnarray*} 
Suppose that the groups $A$ and $G$ admit presentations $\langle X; R_{A}\rangle$ and $\langle Y; R_{G}\rangle$ respectively, where $X$ and $Y$ are sets of generators, while $R_{A}$ and $R_{G}$ are sets of relations. The following well-known procedure outlines a method for putting together a presentation of~$\widetilde{G}$:\\
\begin{itemize}
\item[] {\bf{Generators of $\widetilde{G}$:}} Let $\widetilde{X}= \{\tilde{x}= i(x); \;\; x \in X\}$ be the images of the generators $X$ of $A$ under the homomorphism $i$. Now, given $y \in Y$, let $\tilde{y}$ denote a chosen pre-image of $y$ under $p$, i.e., $p(\tilde{y}) = y$. Define $\widetilde{Y} = \{\tilde{y}; \;\; y \in Y\}$ the set of all such pre-images. Then $\widetilde{X}\cup \widetilde{Y}$ constitute a set of generators for $\widetilde{G}$.\\
\item[] {\bf{Relations:}} There are three types of relations in $\widetilde{G}$:\\
\begin{itemize}
\item[] {\bf{Type $1$:}} Relations of the form $\widetilde{R}_{A} = \{\tilde{r}_{A}; \;\; r_{A} \in R_{A}\}$; where $\widetilde{R}_{A}$ is the set of words in $\widetilde{X}$ obtained from $R_{A}$ by replacing each $x$ by $\tilde{x}$. Thus each $\tilde{r}_{A}$ is an image under  the injective homomorphism $i$ of a relation $r_{A}$ in $\widetilde{G}$;\\
\item[] {\bf{Type $2$:}} Let $\tilde{r}_{G}$ be a word obtained from a relation $r_{G}$ in $R_{G}$ by replacing each $y$ by its chosen pre-image $\tilde{y}$. We see that $p$ maps $\tilde{r}_{G}$ in $\widetilde{G}$ to relation $r_{G}$ in $G$, therefore $\tilde{r}_{G}$ lies in the $\ker(p)$. Since the sequence (\ref{sequencia de grupos}) is exact, we know that $\ker(p)$ equals the image $i(A)$ of $A$ under the homomorphism $i$. Thus $\tilde{r}_{G} = w_{r}$, where $w_{r}$ is a word in $\widetilde{X}$. Accordingly, we have a second set of relations: 
$$\widetilde{R}_{G}=\{\tilde{r}_{G}= w_{r}; \;\; r_{G} \in R_{G}\};$$
\item[] {\bf{Type $3$:}} Choose any $\tilde{y}$ from the set $\widetilde{Y}$ of chosen pre-images of the generator set $Y$ under $p$. The image of $A$ under $i$ is a normal subgroup of $\widetilde{G}$, therefore each conjugate of generator $\tilde{x}$ again belongs to $i(A)$. Thus $\tilde{y}\tilde{x}\tilde{y}^{-1}$ can be written as a word $w_{x}$ over the generators $\widetilde{X}$ of the kernel. We put  $$\widetilde{C}= \{\tilde{y}\tilde{x}\tilde{y}^{-1}= w_{x}; \;\; x \in X, \;\; \tilde{y} \in \widetilde{Y}\}.$$ 
\end{itemize}
\end{itemize}
With the previous notation, the group $\widetilde{G}$ has the presentation $$\langle\widetilde{X},\widetilde{Y}; \;\; \widetilde{R}_{A}, \widetilde{R}_{G}, \widetilde{C}\rangle.$$
\end{propo}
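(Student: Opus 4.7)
The plan is to use the standard universal-property argument. Let $F$ be the free group on $\widetilde{X} \sqcup \widetilde{Y}$, let $N$ be the normal closure of $\widetilde{R}_A \cup \widetilde{R}_G \cup \widetilde{C}$ in $F$, and set $H = F/N$. By construction each relation in the three families holds in $\widetilde{G}$: those in $\widetilde{R}_A$ are images under $i$ of relations in $R_A$; those in $\widetilde{R}_G$ hold because, by the very choice of $w_r$ and by exactness, $\tilde{r}_G$ and $w_r$ represent the same element of $\ker p = i(A) \leq \widetilde{G}$; and those in $\widetilde{C}$ hold because $i(A)$ is normal in $\widetilde{G}$, so $\tilde{y}\tilde{x}\tilde{y}^{-1}\in i(A)$ is expressed there as the word $w_x$. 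Hence the assignment $\tilde{x} \mapsto i(x)$, $\tilde{y} \mapsto \tilde{y}$ descends to a well-defined homomorphism $\phi : H \to \widetilde{G}$, and the task is to show it is an isomorphism.

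Surjectivity of $\phi$ is immediate: for any $\tilde{g} \in \widetilde{G}$, write $p(\tilde{g})$ as a word in $Y$, lift it to a word $v$ in $\widetilde{Y}$, and note that $\tilde{g} v^{-1} \in \ker p = i(A) = \langle\widetilde{X}\rangle$. So $\widetilde{X} \cup \widetilde{Y}$ generates $\widetilde{G}$.

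For injectivity I would first establish a normal form in $H$: every element can be written as a product $uv$ where $u$ is a word in $\widetilde{Y}^{\pm 1}$ and $v$ is a word in $\widetilde{X}^{\pm 1}$. The relations $\widetilde{C}$ let us push any $\tilde{x}^{\pm 1}$ to the right past $\tilde{y}$; to move it past $\tilde{y}^{-1}$ one argues that the conjugate $\tilde{y}^{-1}\tilde{x}\tilde{y}$ is also, modulo $\widetilde{R}_A$, expressible as a word $w_x'$ in $\widetilde{X}$, which is a consequence of $\widetilde{C}$ together with $\widetilde{R}_A$. Now suppose $\phi(uv) = 1$. Since $\phi(v) \in i(A) = \ker p$, applying $p$ gives $\overline{u} = 1$ in $G$, where $\overline{u}$ is obtained from $u$ by replacing each $\tilde{y}$ by $y$. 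Thus $\overline{u}$ is a consequence of $R_G$, and applying the corresponding relations in $\widetilde{R}_G$ rewrites $u$ (in $H$) as a word $u'$ in $\widetilde{X}$; the original element is thereby pushed into the subgroup $\langle \widetilde{X}\rangle \leq H$.

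It remains to observe that the natural map $A \to H$, $x \mapsto \tilde{x}$, is injective: it is well defined because $\widetilde{R}_A$ consists precisely of the defining relations of $A$, and its composition with the restriction of $\phi$ to $\langle\widetilde{X}\rangle$ is $i$, which is injective, forcing the map itself to be injective. Applied to $u'v$ this yields $u'v = 1$ in $H$, completing injectivity. The main obstacle is the normal-form reduction: one must genuinely verify that $\widetilde{R}_A \cup \widetilde{R}_G \cup \widetilde{C}$ suffices to move every $\tilde{x}^{\pm 1}$ across every $\tilde{y}^{\pm 1}$, and in particular that the inverse case $\tilde{y}^{-1}\tilde{x}\tilde{y}$, which is not listed explicitly in $\widetilde{C}$, is a derivable consequence. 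Once this bookkeeping is done, the remaining steps are the clean two-stage reduction sketched above.
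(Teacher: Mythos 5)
The paper does not prove this proposition at all: it is quoted verbatim from Johnson's \emph{Presentation of Groups} (Proposition 1, pp.~138--140) and used as a black box, so there is no in-paper argument to compare against. Your proposal is the standard textbook proof of that result --- build $H=F(\widetilde X\sqcup\widetilde Y)/N$, check the three families of relations hold in $\widetilde G$ to get $\phi\colon H\to\widetilde G$, prove surjectivity by lifting through $p$, and prove injectivity via the normal form $uv$ with $u$ a word in $\widetilde Y^{\pm1}$ and $v$ a word in $\widetilde X^{\pm1}$ --- and the architecture is correct.

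The one point you flag, moving $\tilde x$ past $\tilde y^{-1}$, is indeed the only delicate step, but it is slightly more than ``bookkeeping,'' and your phrasing (``a consequence of $\widetilde C$ together with $\widetilde R_A$'') hides where the real input comes from. The relations $\widetilde C$ only give $\tilde y\langle\widetilde X\rangle\tilde y^{-1}\subseteq\langle\widetilde X\rangle$, and for an endomorphism of an infinitely generated or non-Hopfian subgroup this inclusion alone does not yield normality. What closes it is that conjugation by $\tilde y$ in $\widetilde G$ restricts to an \emph{automorphism} (in particular a surjection) of $i(A)$: for each $x\in X$ choose $a_x\in A$ with $\tilde y\, i(a_x)\,\tilde y^{-1}=i(x)$ and a word $v_x$ in $X$ representing $a_x$; then the word $v_x(w_{x_1},w_{x_2},\dots)$ represents $\alpha(a_x)=x$ in $A$, where $\alpha$ is the induced automorphism, so the identity $v_x(w_{\tilde x_1},\dots)=\tilde x$ is a consequence of $\widetilde R_A$, and combining with $\widetilde C$ gives $\tilde y\,v_x(\widetilde X)\,\tilde y^{-1}=\tilde x$ in $H$, i.e.\ $\tilde y^{-1}\tilde x\tilde y=v_x(\widetilde X)\in\langle\widetilde X\rangle$. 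So the derivation uses $\widetilde C$, $\widetilde R_A$, \emph{and} the surjectivity of the conjugation action on $i(A)$ pulled back through the presentation of $A$; it is not a purely formal consequence of the listed relators. With that inserted, your two-stage reduction and the final step (the composite $A\twoheadrightarrow\langle\widetilde X\rangle\xrightarrow{\phi}\widetilde G$ equals the injective map $i$, forcing $\phi$ to be injective on $\langle\widetilde X\rangle$) go through as written.
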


\vspace{4cm}

\section{\bf{Main results: homotopy generalized string links over surfaces}\label{secao3}}

\vspace{0.2cm}

\subsection{\bf{$\bf{\widehat{B}_{n}(M)}$ group structure}}

Let $M$ be a compact, not necessarily orientable surface of genus $g \geq 1$. Choose $n$ points $\mathcal{P}= \{P_{1},\ldots, P_{n}\}$ to lie in the interior of $M$. Let $I_{1},\ldots,I_{n}$ be $n$ copies of the  interval $I=[0,1]$ and $\coprod_{i=1}^{n}{I_{i}}$ denote the disjoint union of these intervals.

\vspace{0.2cm}

\begin{defi}\label{def3.1}
A \emph{generalized string link $\sigma$ on $n$ strands on a surface $M$} is a smooth or piecewise linear proper embedding $\sigma: \coprod_{i= 1}^{n}{I_{i}} \rightarrow M \times I,$ which fulfills the two following conditions:
\begin{itemize}
\item[{\rm(i)}] $\sigma|_{(I_{i}(0))}= (P_{i},0)$,
\item[{\rm(ii)}] $\sigma|_{(I_{i}(1))} \in \{(P_{1},1),\ldots,(P_{n},1)\}$,
\end{itemize}
where $I_{i}(t)=t\in I_{i}$, for all $t$ and for all $i=1,\ldots,n$.
\end{defi}

\begin{figure}[h]
\center
\includegraphics[scale=0.7]{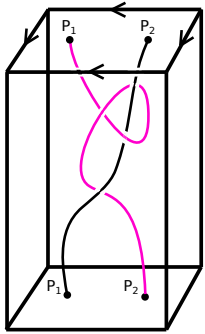}
\caption{A generalized string link $\sigma$ over the $2$-dimensional torus. \label{Fig2.1}}
\end{figure}

Here, we orient the strands downwards from $M \times \{0\}$ to $M \times\{1\}$. Besides, an \emph{ambient isotopy} between generalized string links $\sigma$ and $\sigma^{\prime}$ is an orientation-preserving diffeomorphism of $M \times I$ which maps $\sigma$ onto $\sigma^{\prime}$, keeping the boundary $M \times \{0,1\}$ point-wise fixed and is isotopic to the identity, relative to $M \times \{0,1\}$.
When $\sigma|_{(I_{i}(0))}= (P_{i},1)$, we just obtain a string link, i.e., a string link is a pure generalized string link described in \cite{Yurasovskaya}. When the surface in question is the disk $\mathbb{D}$, we have the homotopy braids described in \cite{Goldsmith}.

Definition $\ref{linkhomotopiaH}$ given in \cite{Habegger} allows us to extend its notion for two {\it{generalized}} string links, since the non-trivial permutation induced by their strands doesn't interfere in the deformation process, which keeps $M \times \{0,1\}$ fixed. Therefore, we have the following definition: 

\begin{defi}\label{linkhomotopia1} 
Two generalized string links $\sigma$ and $\sigma^{\prime}$ are \emph{link-homotopic} if there is a homotopy of the strings in $M \times I$, fixing $M \times \{0,1\}$ and deforming $\sigma$ to $\sigma^{'}$, such that the images of different strings remain disjoint during the deformation. During the course of deformation, each individual strand is allowed to pass through itself but not through other strands. 
\end{defi}

Equivalently, 

\begin{defi}\label{linkhomotopia2}
We say \emph{link-homotopy} is an equivalence relation on generalized string links that is generated by a finite sequence of ambient isotopies of $M \times I$ fixing $M \times \{0,1\}$, and local \emph{crossing changes} of arcs from the \emph{same strand} of a generalized string link called {\it{link-homotopy moves}}. 
\end{defi}

The property of the local crossing changes consists in considering the undercrossing and the overcrossing as the same crossing (in the same strand), as shown in Figure  \ref{Fig2.2}, i.e, a crossing change for generalized string links remains the same as defined for string links.

\vspace{0.3cm}

\begin{figure}[h]
\center
\includegraphics[scale=0.35]{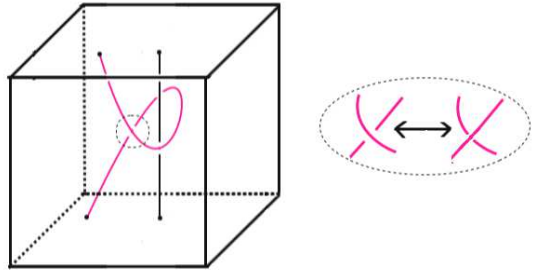}
\caption{A crossing change. \label{Fig2.2}}
\end{figure}

\begin{lema}\label{lema4.2.1}
Let $\sigma$ and $\sigma^{\prime}$ be generalized string links over a surface $M$. Denote by $\sim$ the link-homotopy equivalence relation. If $\sigma \sim \sigma^{\prime}$, then $\sigma \tau \sim \sigma^{\prime} \tau$ and $\tau \sigma \sim \tau \sigma^{\prime}$, for any generalized string links $\tau$.
\end{lema}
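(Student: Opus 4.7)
The plan is to show that link-homotopy is compatible with the concatenation operation by explicitly constructing a link-homotopy between the concatenated generalized string links from a given link-homotopy between $\sigma$ and $\sigma^{\prime}$. Since Definition \ref{linkhomotopia2} realizes link-homotopy as a finite sequence of ambient isotopies of $M \times I$ (fixing $M \times \{0,1\}$) together with local crossing changes of arcs on the same strand, it suffices to handle these two types of moves separately and then concatenate the resulting sequences.

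First I would assume we have a link-homotopy $H \colon \coprod_{i=1}^{n} I_{i} \times [0,1] \to M \times I$ deforming $\sigma$ into $\sigma^{\prime}$, which fixes $M \times \{0,1\}$ and keeps images of different strands disjoint at every time $s \in [0,1]$. To produce the required link-homotopy between $\sigma\tau$ and $\sigma^{\prime}\tau$, I would rescale $H$ into the top half $M \times [0, \tfrac{1}{2}]$ and stack a constant deformation of $\tau$ in the bottom half $M \times [\tfrac{1}{2}, 1]$. Concretely, define
\begin{equation*}
\widetilde{H}(x,s) =
\begin{cases}
\bigl(\pi_{M}\, H(2t, s),\; \tfrac{1}{2}\pi_{I} H(2t,s)\bigr), & x = I_i(t),\ t \in [0,\tfrac{1}{2}], \\
\tau\!\left(I_{\pi(i)}(2t - 1)\right) \text{ shifted into } M \times [\tfrac{1}{2},1], & x = I_i(t),\ t \in [\tfrac{1}{2},1],
\end{cases}
\end{equation*}
where $\pi$ is the permutation induced by $\sigma$ (equal to that of $\sigma^{\prime}$, as link-homotopy preserves endpoints and therefore the induced permutation). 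At every time $s$ the upper part is the link-homotopy-evolving copy of $\sigma$, and it matches continuously at level $\tfrac{1}{2}$ with the fixed copy of $\tau$ because the endpoints $(P_i,1)$ of $\sigma$ are preserved throughout $H$. The construction for $\tau\sigma \sim \tau\sigma^{\prime}$ is entirely symmetric: place $\tau$ in the top half and slide the link-homotopy of $\sigma$ through the bottom half.

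It then remains to verify that $\widetilde{H}$ is genuinely a link-homotopy in the sense of Definition \ref{linkhomotopia1}. Continuity and the boundary conditions $\widetilde{H}(\cdot,0) = \sigma\tau$, $\widetilde{H}(\cdot,1) = \sigma^{\prime}\tau$ are immediate from the construction. The disjointness of distinct strands at each time $s$ follows from the analogous property for $H$ on the top half and from the fact that $\tau$ itself is a generalized string link (hence has disjoint strands) on the bottom half; the two halves cannot interfere because they live in disjoint slabs of $M \times I$.

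The main technical point to be careful about is the matching of endpoints at level $\tfrac{1}{2}$: one must ensure that the strand labels of $\sigma$ at its lower end align with the strand labels of $\tau$ at its upper end, which is exactly why the permutation $\pi$ enters the formula and why it is essential that $H$ fixes $M \times \{0,1\}$ (so the induced permutation is constant in $s$). Once this bookkeeping is set up correctly, both statements $\sigma\tau \sim \sigma^{\prime}\tau$ and $\tau\sigma \sim \tau\sigma^{\prime}$ follow, and concatenating finitely many such elementary moves handles the general case of link-homotopy via Definition \ref{linkhomotopia2}.
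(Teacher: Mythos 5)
Your proposal is correct and follows essentially the same approach as the paper: the paper's (one-line) proof likewise deforms $\sigma$ to $\sigma^{\prime}$ within its own level of the concatenated diagram while holding $\tau$ fixed in the other level. You have simply made explicit the rescaling into slabs, the permutation bookkeeping at the interface, and the disjointness check, all of which the paper leaves implicit.
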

\begin{proof} In both cases, consider the concatenation of the mentioned generalized string links in its levels in the diagram of braids respectively. Thus we can deform $\sigma$ to $\sigma^{\prime}$ under homotopy while $\tau$ is fixed, for any generalized string link $\tau$. 
\end{proof}

\begin{teo}\label{Teorema4.2.2}
Every $n$-strand generalized string link on a surface $M$ is link-homotopic to a braid.
\end{teo}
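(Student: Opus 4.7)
My plan is to reduce the statement to the pure (string link) case already handled by Rolfsen and Fenn (cited as Theorem~3.7 of \cite{Yurasovskaya}). Given a generalized string link $\sigma$ on $M$, it induces a permutation $\pi \in S_{n}$ by the rule $\sigma(I_{i}(1)) = (P_{\pi(i)}, 1)$, which is well-defined because the strands are pairwise disjoint. Since the natural map $B_{n}(M) \to S_{n}$, sending a braid to its underlying permutation, is surjective---the Artin generators $\sigma_{1}, \dots, \sigma_{n-1}$ appearing in Theorem~\ref{Teorema1.1.2} project onto the transpositions $(i,\,i+1)$, which generate $S_{n}$---we can select a geometric braid $\beta_{\pi} \in B_{n}(M)$ realizing $\pi$.

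Next I would form the concatenation $\sigma \cdot \beta_{\pi}^{-1}$. Because $\beta_{\pi}^{-1}$ has induced permutation $\pi^{-1}$, the strand starting at $(P_{i},0)$ first travels through $\sigma$, arriving at the intermediate level as $(P_{\pi(i)}, \tfrac{1}{2})$, and then through $\beta_{\pi}^{-1}$ to the point $(P_{\pi^{-1}(\pi(i))}, 1) = (P_{i}, 1)$. Thus $\sigma \cdot \beta_{\pi}^{-1}$ induces the trivial permutation and is therefore a string link in the sense of Definition~\ref{def_stringlink}. Applying the Rolfsen--Fenn theorem, there exists a pure braid $\beta_{0} \in PB_{n}(M)$ with $\sigma \cdot \beta_{\pi}^{-1} \sim \beta_{0}$. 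Right-multiplying by $\beta_{\pi}$ and invoking Lemma~\ref{lema4.2.1}, we get
$$\sigma \;\sim\; \beta_{0} \cdot \beta_{\pi},$$
and since the right-hand side is the product of two geometric braids, hence again a geometric braid in $B_{n}(M)$, the conclusion follows.

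The main point to guard against is whether the cited Rolfsen--Fenn statement actually applies to the auxiliary object $\sigma \cdot \beta_{\pi}^{-1}$: the hypothesis is that the object be a pure string link on $M$, and by construction it is---there is no need to reprove anything about self-crossings or monotonization. The only other delicate step is the choice of $\beta_{\pi}$, which exists as explained above. Once these are in place, Lemma~\ref{lema4.2.1} does the transfer of the link-homotopy from $\sigma \cdot \beta_{\pi}^{-1}$ back to $\sigma$, so no further geometric argument is required.
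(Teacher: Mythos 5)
Your proof is correct and follows essentially the same route as the paper: compose $\sigma$ with a braid undoing its permutation to obtain a string link, invoke the Rolfsen--Fenn result for the pure case, and transfer back via Lemma~\ref{lema4.2.1}. The only difference is that you make explicit the existence of the permutation-realizing braid $\beta_{\pi}$ (via surjectivity of $B_{n}(M)\to\Sigma_{n}$), a point the paper's proof leaves implicit.
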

\begin{proof} Let us denote by $\sim$ the link-homotopy equivalence relation. We would like to show that if $\sigma$ is a generalized string link, then $\sigma \sim \alpha$, for some $\alpha \in B_{n}(M)$.
Consider $\sigma$ a generalized string link on $n$ strands and $\beta$ some braid on $n$ strands such that the concatenation $\sigma\beta$ is a string link, namely $\sigma^{\prime}$.
Thus we have $\sigma\beta \sim \sigma^{\prime}$. Since $\sigma^{\prime}$ is a string link, it is link-homotopic to a pure braid on $n$-strands, namely $\gamma$. Consequently, $\sigma^{\prime} \sim \gamma$. By the transitivity of the equivalence relation, we have $\sigma \beta \sim \gamma$.
Let $\beta^{-1}$ be the inverse of the braid $\beta$. According to Lemma \ref{lema4.2.1}, we have $\sigma \beta \beta^{-1} \sim \gamma \beta^{-1}$, i.e., $\sigma \sim \gamma \beta^{-1}$, where $\gamma\beta^{-1}$ is a braid on $n$-strands. Declare $\gamma\beta^{-1}=\alpha$.
Therefore, every generalized string link on $n$-strands is link-homotopic to a braid on $n$ strands.
\end{proof} 

Theorem \ref{Teorema4.2.2} tells us that, up to link-homotopy, we can treat generalized string links as braids.\\

From here forward, we let $M$ denote a closed, connected and orientable surface of genus $g \geq 1$. \\

\begin{propo}\label{Prop4.2.4}
The group of link-homotopically trivial surface braids on n-strands, namely $H_{n}(M)$, is a normal subgroup of $B_{n}(M)$.
\end{propo}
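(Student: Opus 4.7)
The plan is to exploit the characterization of $H_n(M)$ as the kernel of the canonical quotient map $PB_n(M)\to\widehat{PB}_n(M)$: an element $h\in PB_n(M)$ lies in $H_n(M)$ if and only if it is link-homotopic to the trivial braid. (This equivalence is precisely Yurasovskaya's result $\widehat{PB}_n(M)\cong PB_n(M)/H_n(M)$ recalled in the excerpt.) Given $\beta\in B_n(M)$ and $h\in H_n(M)$, the goal is to show $\beta h\beta^{-1}\in H_n(M)$; two things must be checked: purity and link-homotopic triviality.

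For purity, I would invoke the permutation homomorphism $B_n(M)\to S_n$. Since this is a group homomorphism whose kernel is $PB_n(M)$, and $h$ induces the trivial permutation, the element $\beta h\beta^{-1}$ induces the permutation $\pi_\beta\cdot\mathrm{id}\cdot\pi_\beta^{-1}=\mathrm{id}$. Therefore $\beta h\beta^{-1}\in PB_n(M)$, so it is at least a candidate for membership in $H_n(M)$.

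For link-homotopic triviality, I would apply Lemma \ref{lema4.2.1} twice. Starting from $h\sim 1$, first take $\tau=\beta^{-1}$ as a right factor to obtain $h\beta^{-1}\sim \beta^{-1}$; then take $\tau=\beta$ as a left factor to obtain $\beta h\beta^{-1}\sim \beta\beta^{-1}=1$, where the final equality is in the braid group $B_n(M)$. Combining the two observations, $\beta h\beta^{-1}$ is a pure braid link-homotopic to the identity, hence lies in $H_n(M)$. This proves $\beta H_n(M)\beta^{-1}\subseteq H_n(M)$ for every $\beta\in B_n(M)$, establishing normality.

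There is no real obstacle here: the content of the statement has essentially been pre-packaged by Lemma \ref{lema4.2.1} and by the identification of $H_n(M)$ with the kernel of $PB_n(M)\to\widehat{PB}_n(M)$. The only point to articulate carefully is the reduction from the description of $H_n(M)$ via normal closure of the commutators $[t_{i,j},t_{i,j}^{h}]$ in $PB_n(M)$ to the intrinsic description as link-homotopically trivial pure braids; once this is in hand, the two-line argument above closes the proof.
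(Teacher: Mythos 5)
Your argument is correct and is essentially the paper's own proof: the paper likewise fixes $\beta$ and $\beta^{-1}$ while deforming the middle factor to the trivial braid (it presents this as a braid-diagram picture rather than as a double application of Lemma \ref{lema4.2.1}, but the content is identical). Your explicit check of purity via the permutation homomorphism is a small point the paper leaves implicit, but it does not change the route.
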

\begin{proof} Clearly, the subgroup $H_{n}(M)$ of $PB_{n}(M)$ is also a subgroup of $B_{n}(M)$ (see \cite{Yurasovskaya}). We show that $\beta H_{n}(M) \beta^{-1} \subseteq H_{n}(M)$, for all $\beta$ in $B_{n}(M)$. Indeed, given $\beta \in B_{n}(M)$, $\sigma \in H_{n}(M)$, consider the braid diagram of $\beta \sigma \beta^{-1}$. Since $\sigma $ is link-homotopic to the trivial braid, namely $1$, and keeping $\beta$ and $\beta^{-1}$ fixed while deforming $\sigma$ to the trivial braid, we have:

\begin{figure}[h]
\center
\includegraphics[scale=0.6]{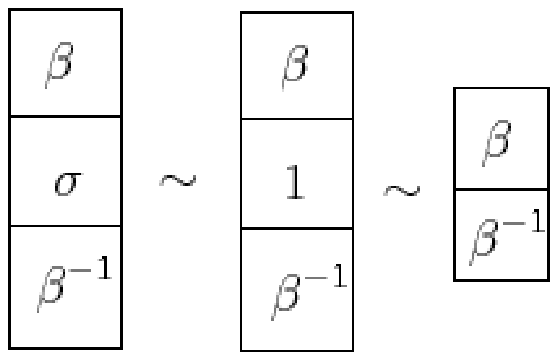}
\end{figure}


Therefore, $\beta H_{n}(M) \beta^{-1}\subseteq H_{n}(M)$ and $H_{n}(M)$ is a normal subgroup of $B_{n}(M)$ as required.
\end{proof}

\vspace{0.2cm}

We denote the set of link-homotopy classes of generalized string links on a surface $M$ by $\widehat{B}_{n}(M)$, which we shall call simply the group of homotopy generalized string links on $M$. $\widehat{B}_{n}(M)$ inherits the operation (concatenation) and the inverse (mirror reflexion up to link-homotopy) from $B_{n}(M)$.

\begin{teo}\label{Teorema4.2.6}
Under concatenation, $\widehat{B}_{n}(M)$ is isomorphic to the quotient of the braid group $B_{n}(M)$ by the normal subgroup of link-homotopically trivial braids $H_{n}(M)$: 
\begin{eqnarray*}{\widehat{B}_{n}(M)} \simeq \frac{B_{n}(M)}{H_{n}(M)}.\end{eqnarray*}
\end{teo}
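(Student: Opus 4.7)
The plan is to realize $\widehat{B}_n(M)$ as the image of a surjective homomorphism from $B_n(M)$ whose kernel is precisely $H_n(M)$, and then invoke the first isomorphism theorem. The key tools are already in place: Lemma \ref{lema4.2.1} (link-homotopy is respected by concatenation), Theorem \ref{Teorema4.2.2} (every generalized string link is link-homotopic to a braid), and Proposition \ref{Prop4.2.4} (normality of $H_n(M)$ in $B_n(M)$).

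First I would make precise that $\widehat{B}_n(M)$ genuinely is a group: the set of link-homotopy classes of generalized string links inherits concatenation as a well-defined binary operation because of Lemma \ref{lema4.2.1}, associativity is inherited from the concatenation of paths, the trivial braid provides the identity, and inverses are given by mirror reflection (which, composed with the original, can be pushed to the trivial braid by link-homotopy moves since every crossing may be changed freely within a single strand). Next, I would define
\begin{equation*}
\phi : B_n(M) \longrightarrow \widehat{B}_n(M), \qquad \phi(\beta) = [\beta],
\end{equation*}
the map sending a braid to its link-homotopy class as a generalized string link. This is well defined because ambient isotopy of braids is a particular case of link-homotopy, and it is a homomorphism because concatenation is preserved on representatives.

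For surjectivity I would apply Theorem \ref{Teorema4.2.2} directly: given any class $[\sigma] \in \widehat{B}_n(M)$, there exists a braid $\alpha \in B_n(M)$ with $\sigma \sim \alpha$, so $[\sigma] = [\alpha] = \phi(\alpha)$. The heart of the argument is identifying the kernel. By definition, $\beta \in \ker(\phi)$ iff $\beta$ is link-homotopic to the trivial braid, which is precisely the defining property of $H_n(M)$; hence $\ker(\phi) = H_n(M)$. Combining this with Proposition \ref{Prop4.2.4} and the first isomorphism theorem yields
\begin{equation*}
\widehat{B}_n(M) \;\cong\; B_n(M)/H_n(M).
\end{equation*}

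The step I expect to require the most care is the identification $\ker(\phi)=H_n(M)$. The subgroup $H_n(M)$ was originally defined in the pure braid setting of \cite{Yurasovskaya} as the normal closure of the commutators $[t_{i,j}, t_{i,j}^h]$ inside $PB_n(M)$; to match it with the kernel of $\phi$ one must argue that a (not necessarily pure) braid $\beta \in B_n(M)$ that is link-homotopic to the trivial braid induces the trivial permutation (since a link-homotopy of generalized string links preserves the endpoint permutation), hence lies in $PB_n(M)$, and then falls into $H_n(M)$ by Yurasovskaya's characterization. The converse inclusion is immediate from the definition of $H_n(M)$. Once this identification is made, the rest of the proof is a routine application of the first isomorphism theorem.
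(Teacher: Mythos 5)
Your proposal is correct and follows essentially the same route as the paper: define the natural projection $B_n(M)\to\widehat{B}_n(M)$, check it is a well-defined surjective homomorphism via Lemma \ref{lema4.2.1} and Theorem \ref{Teorema4.2.2}, identify the kernel with $H_n(M)$ (using that link-homotopy preserves the endpoint permutation, so kernel elements are pure and fall under Yurasovskaya's characterization), and apply the first isomorphism theorem. Your explicit remark that matching $\ker(\phi)$ with the normal closure of the $[t_{i,j},t_{i,j}^{h}]$ requires Yurasovskaya's result is a point the paper's own proof passes over more quickly, but it is the same argument.
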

\begin{proof} Let us consider the natural map: 
\begin{eqnarray*} f: B_{n}(M) \rightarrow \widehat{B}_{n}(M),
\end{eqnarray*}
defined by $f(\beta)= \hat{\beta}$, which takes the isotopy class of each braid to its link-homotopy class. Clearly, there are braids which are not trivial up to isotopy but trivial up to link-homotopy. They form the set of generators of $H_{n}(M)$. Denote by $\approx$ the isotopy equivalence relation and by $\sim$ the link-homotopy equivalence relation.

Firstly, we show that $f$ is a well defined homomorphism: indeed, let $\beta$ and $\gamma$ be two representative braids of the same equivalence class. Consequentely, we have $\beta \approx \gamma$, $f(\beta)=\hat{\beta}$ and $f(\gamma)=\hat{\gamma}$, where $\hat{\beta}$ and $\hat{\gamma}$ are string links provided from $\beta$ and $\gamma$ under finite link-homotopy moves, respectively. Thus, $f(\beta)=\hat{\beta} \sim \beta$, $f(\gamma)=\hat{\gamma} \sim \gamma$ and $f(\gamma)=\hat{\gamma} \sim \gamma \approx \beta \sim \hat{\beta}= f(\beta)$. Since isotopy implies link-homotopy, we have $f(\beta)= f(\gamma)$. Accordingly, $f$ is well defined.

By Theorem \ref{Teorema4.2.2}, $f$ is surjective. Thus, $\dfrac{B_{n}(M)}{\ker(f)}\simeq \widehat{B}_{n}(M)$.

Finally, we claim that $\ker(f)=H_{n}(M)$. Indeed, $\ker(f)= \{\beta \in B_{n}(M); f(\beta)=1\}$. If $\beta \in \ker(f)$, then we have that $\beta$ is a braid link-homotopic to the trivial braid, provided by a finite sequence of link-homotopic moves. Moreover, $\beta$ must be pure since permutation is a braid invariant. Therefore, we find that $\ker(f)\subseteq H_{n}(M)$. Conversely, if $\beta \in H_{n}(M)$, then $\beta$ is link-homotopic to the trivial braid. Choose a generalized string link $\hat{\beta}$ that is link-homotopic to $\beta$ under a finite sequence of link-homotopic moves. Then, $\beta \in \ker(f)$.
Therefore, we have $\widehat{B}_{n}(M) \simeq \dfrac{B_{n}(M)}{H_{n}(M)}$, as required.
\end{proof}

\vspace{0.5cm}

Now, given a generalized string link $\sigma$, let us denote by $\pi(\sigma)$ the permutation induced by $\sigma$. Let $\Sigma_{n}$ be the symmetric group on $n$ elements, and consider the map $\psi: \widehat{B}_{n}(M) \rightarrow \Sigma_{n}$ defined by $\psi(\sigma)= \pi(\sigma)$, for all $\sigma$ in $\widehat{B}_{n}(M)$. 

We claim that $\psi$ is a well defined homomorphism. Indeed, if $\sigma$ and $\sigma^{\prime}$ are two generalized string links in the same equivalence class, then both have the same permutation. Thus, $\psi(\sigma)= \psi(\sigma^{\prime})$ and the map is well defined as claimed. Clearly, $\psi$ is a homomorphism and is also surjective by construction. Thus,  $\dfrac{\widehat{B}_{n}(M)}{\ker(\psi)}$ is isomorphic to $\Sigma_{n}$. By definition, $\ker(\psi)= \widehat{PB}_{n}(M)$. Consequently, we have the following result:

\begin{propo}\label{Prop4.2.7}
$\widehat{PB}_{n}(M)$ is a normal subgroup of $\widehat{B}_{n}(M)$. Moreover, under the homomorphism $\psi$ defined previously, we have the well defined short exact sequence:
$$\xymatrix{
                      1 \ar[r]   &   \widehat{PB}_{n}(M) \ar[r]^{i}   &   \widehat{B}_{n}(M) \ar[r]^{\hspace{0.2cm}\psi}   &   \Sigma_{n} \ar[r]   &   1
},$$ where $i$ is the inclusion homomorphism.
\end{propo}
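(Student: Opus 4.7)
The plan is to verify the four standard properties of a short exact sequence for the map $\psi$ that the text has already introduced, and then invoke the first isomorphism theorem implicitly in the statement. Since the surjectivity of $\psi$ and the identification $\ker(\psi) = \widehat{PB}_{n}(M)$ were sketched in the paragraph immediately preceding the proposition, the proof really amounts to assembling those observations carefully and justifying the well-definedness of $\psi$.

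First I would make precise that $\psi$ is well-defined on link-homotopy classes. Given a generalized string link $\sigma$, the induced permutation $\pi(\sigma)$ is determined by the assignment $P_i \mapsto \sigma|_{I_i(1)}$. By Definitions \ref{linkhomotopia1} and \ref{linkhomotopia2}, a link-homotopy fixes $M \times \{0,1\}$ pointwise, so the endpoints of each strand (and hence $\pi(\sigma)$) are preserved throughout any link-homotopy deformation. Thus $\psi(\sigma)$ depends only on the link-homotopy class of $\sigma$. To see that $\psi$ is a homomorphism, observe that for generalized string links $\sigma, \tau$ the concatenation $\sigma \tau$ sends $P_i$ to the endpoint of the $\pi(\sigma)(i)$-th strand of $\tau$, which is $P_{\pi(\tau)(\pi(\sigma)(i))}$; hence $\pi(\sigma\tau) = \pi(\tau)\circ \pi(\sigma)$ up to the chosen convention, so $\psi$ respects concatenation.

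Next I would address surjectivity: given any $\rho \in \Sigma_n$, realize $\rho$ by an $n$-strand braid on $M$ (for instance, by taking disjoint arcs inside a small disk of $M$ that connect $(P_i,0)$ to $(P_{\rho(i)},1)$ monotonically), and regard its link-homotopy class as an element of $\widehat{B}_{n}(M)$ with image $\rho$ under $\psi$. For the kernel, the class $[\sigma] \in \widehat{B}_{n}(M)$ lies in $\ker(\psi)$ iff $\pi(\sigma) = \mathrm{id}$; by Theorem \ref{Teorema4.2.2}, every generalized string link is link-homotopic to a braid, and a braid with trivial permutation is a pure braid, so $[\sigma]$ is represented by an element of $PB_n(M)$. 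Modding out by $H_n(M)$ (cf. Theorem \ref{Teorema4.2.6} and the quotient description of $\widehat{PB}_n(M)$), this shows $\ker(\psi) = \widehat{PB}_{n}(M)$, and normality follows automatically as the kernel of a homomorphism.

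Finally I would assemble the short exact sequence
\begin{equation*}
\xymatrix{1 \ar[r] & \widehat{PB}_{n}(M) \ar[r]^{i} & \widehat{B}_{n}(M) \ar[r]^{\psi} & \Sigma_{n} \ar[r] & 1,}
\end{equation*}
with $i$ the inclusion (injective by definition), exactness at $\widehat{B}_n(M)$ given by the kernel computation, and exactness at $\Sigma_n$ given by surjectivity. The only even mildly delicate point is well-definedness of $\psi$, but this is immediate from the fact that link-homotopies fix $M \times \{0,1\}$ pointwise; there is no substantive obstacle here, only bookkeeping.
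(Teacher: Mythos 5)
Your proposal is correct and follows essentially the same route as the paper, which establishes the proposition in the paragraph immediately preceding it: well-definedness of $\psi$ from the fact that link-homotopy fixes $M \times \{0,1\}$, surjectivity by construction, and the identification $\ker(\psi) = \widehat{PB}_{n}(M)$, with normality coming for free as the kernel of a homomorphism. You simply supply more of the bookkeeping (the homomorphism property via composition of permutations and the use of Theorem \ref{Teorema4.2.2} to reduce the kernel computation to pure braids) than the paper records explicitly.
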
 

\subsection{A presentation for the group of homotopy generalized string links on surfaces}

Since we have defined the group homotopy generalized string links over an orientable surface $M$ of genus $g\geq 1$, which is, informally speaking, a generalization for the braid group for surfaces, a classical topic appears: the search for its presentation. Furthermore, we would like to find if $\widehat{B}_{n}(M)$ is finitely presented, i.e, if there is a finite set of generators and relations which define this group. The purpose of this section is to prove the following result:

\vspace{0.2cm}

\begin{teo}\label{Teorema4.4.1}
Let $M$ be a closed, orientable surface of genus $g\geq 1$. The group of link-homotopy classes of generalized string links over $M$, namely $\widehat{B}_{n}(M)$, admits the following presentation:
\begin{itemize}
\item[] {\bf{Generators:}} $\{a_{1,1},\ldots,a_{1,2g}\} \cup \{\sigma_{1},\ldots,\sigma_{n-1}\}$;
\item[] {\bf{Relations:}}
\begin{itemize}
\item[{\rm(LH)}] $[{t_{1,j}},{t^{h}_{1,j}}]=1 \hspace{8.5cm} h \in \mathbb{F}(2g+n-1)$;
\item[{\rm(R1)}] $\sigma_{i}\sigma_{j}= \sigma_{j}\sigma_{i}  \hspace{9.9cm} |i-j|\geq 2$;
\item[{\rm(R2)}] $\sigma_{i}\sigma_{i+1}\sigma_{i} = \sigma_{i+1}\sigma_{i}\sigma_{i+1}  \hspace{7.6cm} 1\leq i \leq n-2$;
\item[{\rm(R3)}]
$\mathit{a}_{1, 1}\cdots\mathit{a}_{1, 2g}\mathit{a}_{1, 1}^{-1}\cdots\mathit{a}_{1, 2g}^{-1}=
\sigma_{1}\cdots\sigma_{n-2}\sigma_{n-1}^{2}\sigma_{n-2}\cdots\sigma_{1}$
\item[{\rm(R4)}] $\mathit{a}_{1, r}A_{2, s}= A_{2, s}\mathit{a}_{1, r} \hspace{4.9cm} 1\leq r \leq 2g \ 1\leq s \leq 2g-1, \ r\neq s$;
\item[{\rm(R5)}] $(\mathit{a}_{1,1}\cdots\mathit{a}_{1, r})A_{2, r}=
\sigma_{1}^{2}A_{2, r}(\mathit{a}_{1, 1}\cdots\mathit{a}_{1, r}) \hspace{4.7cm} 1\leq r \leq 2g-1$;
\item[{\rm(R6)}] $\mathit{a}_{1, r}\sigma_{i}= \sigma_{i}\mathit{a}_{1, r} \hspace{8.3cm} 1\leq r \leq 2g; \  i\geq 2$;
\end{itemize}
\end{itemize}where:
\begin{eqnarray*} 
t_{1,j}&=& \sigma_{1}\cdots\sigma_{j-2}\sigma_{j-1}^{2}\sigma_{j-2}^{-1} \cdots \sigma_{1}^{-1}, \hspace{0.3cm} j=2,\ldots,n, \\ 
A_{2,s}&=&\sigma_{1}^{-1}(a_{1,1}\cdots a_{1,s-1}a_{1,s+1}^{-1}\cdots a_{1,2g}^{-1})\sigma_{1}^{-1},  \hspace{0.3cm} s=1,\ldots,2g-1.
\end{eqnarray*}
\end{teo}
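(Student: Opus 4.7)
The plan is to derive the presentation by combining Theorem~\ref{Teorema4.2.6}, which identifies $\widehat{B}_n(M)$ with $B_n(M)/H_n(M)$, with the known presentation of $B_n(M)$ from Theorem~\ref{Teorema1.1.2}. The latter already uses exactly the proposed generators $\{a_{1,1},\ldots,a_{1,2g}\}\cup\{\sigma_1,\ldots,\sigma_{n-1}\}$ and exactly the relations (R1)--(R6). Hence what remains is to translate a normal generating set for $H_n(M)\subset B_n(M)$ into additional relators and to check that the resulting family collapses to the single one labelled (LH).

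The description of $H_n(M)$ recalled after Theorem~\ref{Teorema2.1.5} exhibits it as the normal closure in $PB_n(M)$ of
\[
\mathcal{S} = \{[t_{i,j},\, t_{i,j}^{h}] : 1\le i<j \le n,\ h \in \mathbb{F}(2g+n-i)\}.
\]
Since $PB_n(M)$ is normal in $B_n(M)$, the same set $\mathcal{S}$ normally generates $H_n(M)$ inside $B_n(M)$. The key step will be to reduce $\mathcal{S}$ to the subfamily
\[
\mathcal{S}_1 = \{[t_{1,j},\, t_{1,j}^{h}] : 2\le j\le n,\ h\in \mathbb{F}(2g+n-1)\},
\]
which is precisely the content of relation (LH). For each $i\ge 2$, the idea is to use a braid $\beta_i\in B_n(M)$ (e.g. a suitable product of the $\sigma_j$'s) whose induced permutation carries the label $i$ to $1$. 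Conjugation by $\beta_i$ should then carry the free subgroup $\pi_1(M\setminus\mathcal{P}_{n-i};P_i)\cong \mathbb{F}(2g+n-i)$, generated by $\{a_{i,r}\}\cup\{t_{i,j}\}$, into $\pi_1(M\setminus\mathcal{P}_{n-1};P_1)\cong \mathbb{F}(2g+n-1)$, sending each $t_{i,j}$ to a conjugate of some $t_{1,k}$. Granting this, $\beta_i[t_{i,j},t_{i,j}^{h}]\beta_i^{-1}$ becomes $B_n(M)$-conjugate to an element of $\mathcal{S}_1$, so $\mathcal{S}_1$ normally generates all of $H_n(M)$ inside $B_n(M)$.

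Putting the pieces together yields the announced presentation. The main obstacle is justifying the claim that conjugation by $\beta_i$ has the prescribed effect on the distinguished generators $a_{i,r}$ and $t_{i,j}$: this amounts to making the Artin-style action of $B_n(M)$ on the fundamental group of $M$ minus the remaining punctures explicit. It can be verified either by an explicit computation, unwinding the defining pure-braid words for $t_{i,j}$ and $a_{i,r}$ and simplifying modulo (R1)--(R6), or, more conceptually, by invoking the short exact sequence of Proposition~\ref{Prop4.2.7}: the projection of $\beta_i$ to $\Sigma_n$ implements the interchange of strand labels $i\leftrightarrow 1$, and this relabelling acts on the pure-braid generators in the way required to carry $\mathcal{S}$ into the normal closure of $\mathcal{S}_1$.
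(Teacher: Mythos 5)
Your strategy is correct in outline, but it is organized differently from the paper's proof. The paper does not present $\widehat{B}_{n}(M)$ as $B_{n}(M)$ modulo relators for $H_{n}(M)$; it instead applies the extension method of Proposition \ref{Prop1.3.1} to the short exact sequence $1\to\widehat{PB}_{n}(M)\to\widehat{B}_{n}(M)\to\Sigma_{n}\to 1$ of Proposition \ref{Prop4.2.7}, starting from Yurasovskaya's presentation of $\widehat{PB}_{n}(M)$ (Theorem \ref{Teorema2.1.5}) and the Coxeter presentation of $\Sigma_{n}$, introduces the abstract group $\mathcal{B}_{n}$ with auxiliary generators $a_{i,r}$, $t_{j,k}$ and relations {\rm(R7)}--{\rm(R9)}, and checks that the natural map $\varphi\colon\mathcal{B}_{n}\to\widehat{B}_{n}(M)$ is a well-defined isomorphism, deferring the Type $1$--$3$ bookkeeping for {\rm(R1)}--{\rm(R9)} to González-Meneses. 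Your route combines Theorem \ref{Teorema4.2.6} with the presentation of $B_{n}(M)$ (Theorem \ref{Teorema1.1.2}) and the standard fact that a presentation of a quotient $G/N$ is obtained by adjoining a normal generating set of $N$ as relators; this bypasses the presentation of $\widehat{PB}_{n}(M)$ and the symmetric-group extension entirely, using only the normal-closure description of $H_{n}(M)$. What the two arguments share is the crux: conjugating $[t_{i,j},t_{i,j}^{h}]$ by $\alpha=\sigma_{1}\cdots\sigma_{i-1}$ onto $[t_{1,j},t_{1,j}^{h'}]$ and tracking that $h\in\mathbb{F}(2g+n-i)$ corresponds to $h'\in\mathbb{F}(2g+n-1)$; the paper performs exactly this computation (leaving the last verification to ``simple calculations using {\rm(R7)}, {\rm(R8)}''), so your argument is no less complete there, and it is arguably more economical.

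Two cautions. First, your justification that $\mathcal{S}$ normally generates $H_{n}(M)$ inside $B_{n}(M)$ ``since $PB_{n}(M)$ is normal in $B_{n}(M)$'' is not the right reason: normality of $PB_{n}(M)$ does not prevent the $B_{n}(M)$-normal closure of $\mathcal{S}$ from being strictly larger than $H_{n}(M)$. What you need is Proposition \ref{Prop4.2.4} (conjugates of link-homotopically trivial braids are link-homotopically trivial), which the paper proves geometrically and which you should cite at this point. Second, the containment $\alpha\,\mathbb{F}(2g+n-i)\,\alpha^{-1}\subseteq\mathbb{F}(2g+n-1)$ is not a pure ``relabelling of strands'' read off from $\Sigma_{n}$: for example $\sigma_{1}a_{2,r}\sigma_{1}^{-1}=\sigma_{1}^{2}a_{1,r}=t_{1,2}\,a_{1,r}$, which is a word in the generators of $\mathbb{F}(2g+n-1)$ but not simply $a_{1,r}$, so this step must be carried out as an explicit computation with {\rm(R7)}--{\rm(R9)}, as in the paper.
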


\begin{figure}[h]
\center
\includegraphics[scale=0.5]{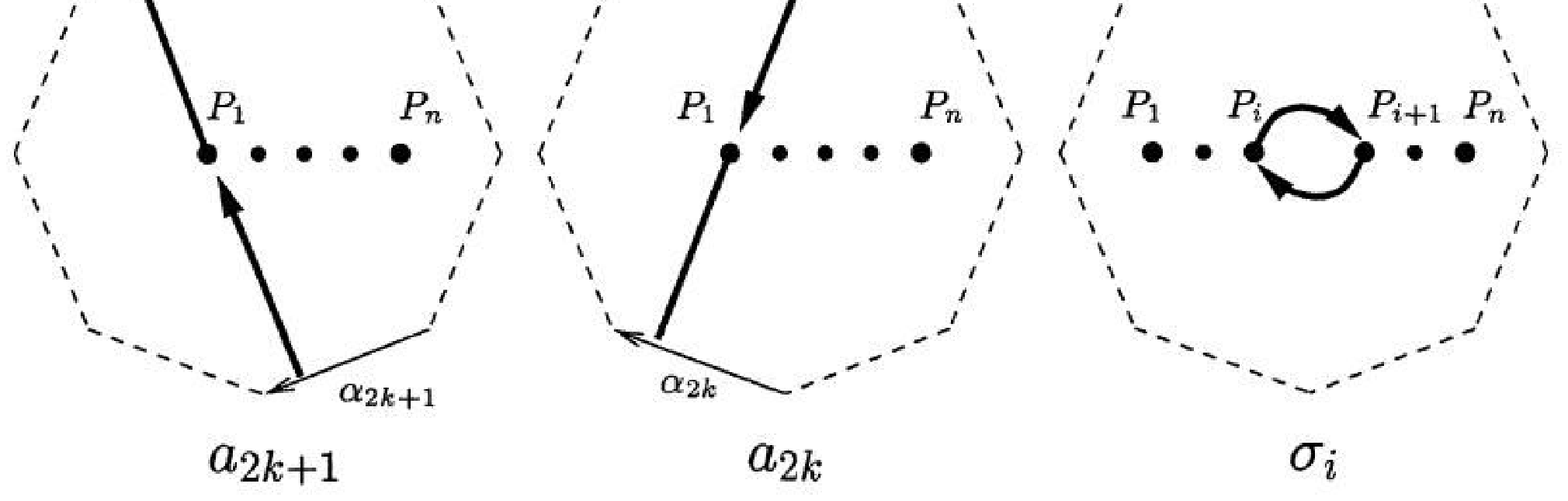}
\vspace{-11cm}
\caption{Generators of $\widehat{B}_{n}(M)$, where $a_{r}=a_{1,r}$ \cite{Gonzales}.}
\end{figure}

To prove Theorem \ref{Teorema4.4.1} we apply methods which are similar to those used by González--Meneses in \cite{Gonzales} for computing the presentation of the surface braid group $B_{n}(M)$ over a closed surface $M$ discussed in Proposition \ref{Prop1.3.1}. Such methods are well known and classical in the literature for computing presentations of groups. In this paper, we use the notations and some arguments by \cite{Gonzales} to establish connections with the presentations of the braid groups $B_{n}(M)$ and the generalized string links groups  $\widehat{B}_{n}(M)$ guaranteed by Theorem \ref{Teorema4.2.6}. \\

\noindent{\bf{The idea of the proof:}}\\

Recall the short exact sequence from Proposition \ref{Prop4.2.7}:
$$\xymatrix{
                      1 \ar[r]   &   \widehat{PB}_{n}(M) \ar[r]^{i}   &   \widehat{B}_{n}(M) \ar[r]^{\hspace{0.2cm}\psi}   &   \Sigma_{n} \ar[r]   &   1  \hspace{1cm} (\star)
}$$ 

Now, consider the presentation of $\widehat{PB}_{n}(M)$ given in Theorem \ref{Teorema2.1.5} and the presentation for the symmetric group on $n$ elements, namely $\Sigma_{n}$ \cite{Harpe}, as follows: \\

\textbf{Presentation of $\Sigma_{n}$:}
\begin{itemize}
\item[]{\textbf{Generators:}} $\delta_{1},\ldots, \delta_{n-1}$.
\item[]{\textbf{Relations:}}
\begin{itemize}
\item[(SR1)] $\delta_{i}\delta_{j}= \delta_{j}\delta_{i}$ \hspace{3.3cm} $|i-j|\geq 2$;
\item[(SR2)] $\delta_{i}\delta_{i+1}\delta_{i}= \delta_{i+1}\delta_{i}\delta_{i+1}$ \hspace{1.1cm} $1\leq i \leq n-2$;
\item[(SR3)] $\delta_{i}^{2}= 1$ \hspace{3.4cm} $1\leq i \leq n-1$;
\end{itemize}
where $\delta_{i}$ is the permutation $(i, \ i+1)$, for all $i$.
\end{itemize}

\vspace{0.3cm}

We suppose that there is an abstract group, namely $\mathcal{B}_{n}$, given by the presentation of Theorem $\ref{Teorema4.4.1}$. We then define a map $\varphi: \mathcal{B}_{n} \rightarrow \widehat{B}_{n}(M)$ in the natural way, and we prove that it is a well defined homomorphism. Finally, we apply Proposition $\ref{Prop1.3.1}$ to the exact sequence $(\star)$ given in Proposition $\ref{Prop4.2.7}$ to show that $\varphi$ is an isomorphism.

Observe that the presentation for $\widehat{B}_{n}(M)$ given in Theorem $\ref{Teorema4.4.1}$ differs from the Gonzalez-Meneses presentation \cite[Theorem $2.1$]{Gonzales} by the link-homotopic relation ${\rm{(LH)}}$. This is a relation of Type $1$ according to Proposition \ref{Prop1.3.1}. Most computations involving the generators and relations from \cite{Gonzales} will not be repeated. During the proof, we give one of these relations as an example and it remains unaltered from \cite{Gonzales}. Moreover, all the details omitted during the proof can be found in \cite{Gonzales}.\\

\begin{proof}[Proof of Theorem  \ref{Teorema4.4.1}]
Let us call $\mathcal{B}_{n}$ the abstract group that admits the presentation of Theorem \ref{Teorema4.4.1}. To show the validity of the presentation, we need to add some auxiliary generators and relations:

\begin{itemize}
\item[] {\bf{new generators:}}
\begin{itemize}
\item[-] $a_{i,r}$,\;\; $2\leq i \leq n$;
\item[-] $t_{j,k}$,\;\; $1 \leq j< k \leq n$.
\end{itemize}
\item[] {\bf{new relations:}}
\begin{itemize}
\item[(R7)] $a_{j+1,r}=\sigma_{j}a_{j,r}\sigma_{j} \hspace{4.8cm}1\leq j \leq n-1; 1\leq r \leq 2g$; $r$ even;
\item[(R8)] $a_{j+1,r}=\sigma_{j}^{-1}a_{j,r}\sigma_{j}^{-1} \hspace{4.4cm}1\leq j \leq n-1; 1\leq r \leq 2g$; $r$ odd;
\item[(R9)] $t_{i,j}= \sigma_{i}\sigma_{i+1}\cdots\sigma_{j-2}\sigma_{j-1}^{2}\sigma_{j-2}^{-1}\cdots\sigma_{i+1}^{-1}\sigma_{i}^{-1} \hspace{3.8cm}1\leq j<k\leq n$.
\end{itemize}
\end{itemize}

Clearly these new relations still define the same group, i.e., $\mathcal{B}_{n}$. 

It is easy to see that ${\rm{(R7)}}$, ${\rm{(R8)}}$ and ${\rm{(R9)}}$ hold in $\widehat{B}_{n}(M)$, since they hold in $B_{n}(M)$, meaning isotopy implies link-homotopy (see definition \ref{linkhomotopia2}). 

Now let us define the mapping in a natural way:
\begin{eqnarray*}
\begin{split}
\varphi: & \mathcal{B}_{n} \rightarrow \widehat{B}_{n}(M)\\
     & \sigma_{i} \longmapsto \sigma_{i}, \;\; 1\leq i \leq n-1\\
     & a_{1, r} \longmapsto a_{1, r}, \;\; 1\leq r \leq 2g.
\end{split}
\end{eqnarray*}

Observe that we keep the notation $\sigma_{i}$ and $a_{1,r}$ for the braids that will be the images of the generators $\sigma_{i}$ and $a_{1,r}$ of $\mathcal{{B}}_{n}$ under the homomorphism $\varphi$, to avoid excessive notation. Such braids are defined as follows: $\sigma_{i}$ are the elementary braids on $n$ strands on the disk ($B_{n} \hookrightarrow {B}_{n}(M)$) and $a_{1,r}$ are the braids that ``go through the wall of the cylinder$"$ starting and arriving at $P_{1}$, with the remaining strands being trivial. 

We claim that $\varphi$ is well defined, proving that the relations of the presentation of $\mathcal{B}_{n}$ hold in $\widehat{B}_{n}(M)$.
Indeed, according to \cite{Gonzales}, the relations from ${\rm{(R1)}}$ to ${\rm{(R9)}}$ hold in $B_{n}(M)$ since isotopy equivalence relation implies link-homotopy equivalence relation (see Definition \ref{linkhomotopia2}). Thus we find that they still hold in $\widehat{B}_{n}(M)$.

Let us give the braid diagram showing that the relation ${\rm{(R6)}} a_{1,r}\sigma_{i}= \sigma_{i}a_{1,r}$, $1 \leq r \leq 2g$ holds in $\widehat{B}_{n}(M)$, given in \cite{Gonzales}:

\vspace{-0.3cm}

\begin{figure}[h]
\center
\includegraphics[scale=0.2]{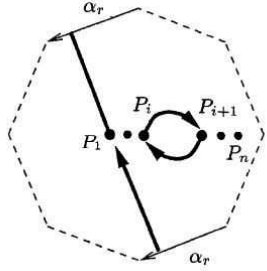}
\vspace{-2.5cm}
\caption{The braid $a_{1,r}\sigma_{i}= \sigma_{i}a_{1,r}$ \cite{Gonzales}.\label{disney4}}
\end{figure}

Now, consider the relation (under $\varphi$) ${\rm{(LH)}} [t_{1,j},t_{1,j}^{h}]=1$, $h \in \mathbb{F}(2g+n-1)$ and recall that the generator set of $\mathbb{F}(2g+n-1)$ is given by $\{a_{1,r}, 1\leq r \leq 2g\} \cup \{t_{1,j}, 2\leq j\leq n\}$. Such relations hold in $\widehat{B}_{n}(M)$, since ${\rm{(LH)}}$ is a particular case from ${\rm{(LH1)}}$ in the presentation of $\widehat{PB}_{n}(M)$, which is contained in $\widehat{B}_{n}(M)$ under the inclusion. 

In order to show that $\varphi$ is surjective, consider the short exact sequence $(\star)$. By applying Proposition \ref{Prop1.3.1} to find the generators of $\widehat{B}_{n}(M)$, we have two types of generators: firstly, the generators of $\widehat{PB}_{n}(M)$ that become generators of $\widehat{B}_{n}(M)$ under the inclusion: $$\{a_{i, r}; \ 1\leq i \leq n, \  1\leq r \leq 2g\}\cup \{t_{j, k}; \  1\leq j < k \leq n\}.$$

Secondly, each $\sigma_{i}$ is a pre-image of $\delta_{i} \in \Sigma_{n}$, for $i=1,\ldots,n-1$, under the surjective homomorphism $\psi$ in the exact sequence $(\star)$. Consequently, we have the generators $$\{\sigma_{i}; 1\leq i \leq n-1\}.$$

Thus, we have $\{a_{i, r}; \ 1\leq i \leq n, \  1\leq r \leq 2g\}\cup \{t_{j, k}; \  1\leq j < k \leq n\} \cup \{\sigma_{i}; 1\leq i \leq n-1\}$ as the full set of generators of $\widehat{B}_{n}(M)$. However, we should note that, by the relation ${\rm{(R9)}}$, $t_{j,k}$ is written as a product of $\sigma_{i}$'s, and by the relations ${\rm{(R7)}}$ and ${\rm{(R8)}}$, the braids $a_{i, r}$ are written as a product of $a_{1, r}$. Accordingly, we reduce the set of generators to $$\{a_{1, r}; \;\; 1\leq r \leq 2g\} \cup \{\sigma_{i}; 1\leq i \leq n-1\}.$$
Therefore, we have $\varphi$ surjective, as required.

Finally, in order to show that $\varphi$ is injective, we need to demonstrate that each relation in $\widehat{B}_{n}(M)$ still holds in $\mathcal{B}_{n}$, i.e., each relation of Type $1,2$ and $3$ according to the Proposition $\ref{Prop1.3.1}$, holds in 
$\mathcal{B}_{n}$. We observe that relations from ${\rm{(R1)}}$ to ${\rm{(R9)}}$ in $\widehat{B}_{n}(M)$ come from the same relations of $B_{n}(M)$, since isotopy implies link-homotopy, and again they are the same for $\widehat{B}_{n}(M)$ and their computations can be found in \cite{Gonzales}.

For the link-homotopy relation ${\rm{(LH)}} \ [t_{1,j},t_{1,j}^{h}]= 1, h \in \mathbb{F}(2g+n-1)$, we prove that the relation ${\rm{(LH1)}}$ given in Theorem \ref{Teorema2.1.5} is a relation of $\widehat{B}_{n}(M)$ under inclusion, namely, ${\rm{(LH)}}$. Consequently, it also holds in $\mathcal{B}_{n}$. Indeed, we start with ${\rm{(R9)}}$:

\begin{eqnarray*}
t_{i,j}  =  \sigma_{i}\sigma_{i+1}\cdots\sigma_{j-2}\sigma_{j-1}^{2}\sigma_{j-2}^{-1}\cdots\sigma_{i+1}^{-1}\sigma_{i}^{-1}\\ 
\Leftrightarrow (\sigma_{1}\sigma_{2}\cdots\sigma_{i-1})t_{i,j}(\sigma_{1}\sigma_{2}\cdots\sigma_{i-1})^{-1} = t_{1,j},
\end{eqnarray*} so, we have
\begin{eqnarray}\label{disney}
t_{i,j} = (\sigma_{1}\sigma_{2}\cdots\sigma_{i-1})^{-1}t_{1,j}(\sigma_{1}\sigma_{2}\cdots\sigma_{i-1}).
\end{eqnarray}

In order to simplify the notation, declare $\alpha =  (\sigma_{1}\sigma_{2}\cdots\sigma_{i-1})$. From Equation $(\ref{disney})$, we have

\begin{eqnarray*}
[t_{1,j},t_{1,j}^{h}] & = & t_{1,j}ht_{1,j}h^{-1}t_{1,j}^{-1}ht_{1,j}^{-1}h^{-1} \\
&=&\alpha(\alpha^{-1}t_{1,j}\alpha) \alpha^{-1}h\alpha(\alpha^{-1}t_{1,j}\alpha)\alpha^{-1}h^{-1}\alpha(\alpha^{-1}t_{1,j}^{-1}\alpha)\alpha^{-1}h\alpha(\alpha^{-1}t_{1,j}^{-1}\alpha)\alpha^{-1}h^{-1}\alpha \alpha^{-1},\\
\end{eqnarray*} and therefore,
\begin{flalign}\label{disney2}
[t_{1,j},t_{1,j}^{h}] = \alpha(t_{i,j}) \alpha^{-1}h\alpha(t_{i,j})\alpha^{-1}h^{-1}\alpha(t_{i,j}^{-1})\alpha^{-1}h\alpha(t_{i,j}^{-1})\alpha^{-1}h^{-1}\alpha \alpha^{-1}.
\end{flalign}

Now, from Equation $(\ref{disney2})$, we have that ${\rm{(LH)}}$ turns out to be equivalent to the relation ${\rm(LH1)}$ from $\widehat{PB}_{n}(M)$:

\begin{eqnarray*}
1&=& [t_{1,j},t_{1,j}^{h}]\\
\Leftrightarrow 1&=& \alpha(t_{i,j}) \alpha^{-1}h\alpha(t_{i,j})\alpha^{-1}h^{-1}\alpha(t_{i,j}^{-1})\alpha^{-1}h\alpha(t_{i,j}^{-1})\alpha^{-1}h^{-1}\alpha \alpha^{-1}\\
\Leftrightarrow 1&=& t_{i,j}{\underbrace{\alpha^{-1}h\alpha}_{g}} t_{i,j}{\underbrace{\alpha^{-1}h^{-1}\alpha}_{g^{-1}}} t_{i,j}^{-1}{\underbrace{\alpha^{-1}h\alpha}} t_{i,j}^{-1}{\underbrace{\alpha^{-1}h^{-1}\alpha}}\\
\Leftrightarrow 1&=& t_{i,j}gt_{i,j}g^{-1}t_{i,j}^{-1}gt_{i,j}^{-1}g^{-1}\\
\Leftrightarrow 1&=& [t_{i,j},t_{i,j}^{g}].
\end{eqnarray*}
With simple calculations, by using ${\rm{(R7)}}$ and ${\rm{(R8)}}$ if necessary, we get that $g$ is an element on the generators of $\mathbb{F}(2g+n-i)$, since $h$ is an element on the generators of $\mathbb{F}(2g+n-1)$. Thus, $\varphi$ is injective. 

Therefore, $\mathcal{B}_{n}$ is isomorphic to $\widehat{B}_{n}(M)$.
\end{proof}

\begin{coro}\label{coro3.10}
Let $S$ be a surface obtained by deleting a single point from a compact, connected and orientable surface without boundary, different from the sphere $\mathbb{S}^{2}$. The group of link-homotopy classes of generalized string links over $S$, namely $\widehat{B}_{n}(S)$, admits the following presentation:
\begin{itemize}
\item[] {\bf{Generators:}} $\{a_{1,1},\ldots,a_{1,2g}\} \cup \{\sigma_{1},\ldots,\sigma_{n-1}\}$;
\item[] {\bf{Relations:}}
\begin{itemize}
\item[{\rm(LH)}] $[{t_{1,j}},{t^{h}_{1,j}}]=1 \hspace{8.4cm} h \in \mathbb{F}(2g+n-1)$;
\item[{\rm(R1)}] $\sigma_{i}\sigma_{j}= \sigma_{j}\sigma_{i}  \hspace{9.9cm} |i-j|\geq 2$;
\item[{\rm(R2)}] $\sigma_{i}\sigma_{i+1}\sigma_{i} = \sigma_{i+1}\sigma_{i}\sigma_{i+1}  \hspace{7.6cm} 1\leq i \leq n-2$;
\item[{\rm(R4)}] $\mathit{a}_{1, r}A_{2, s}= A_{2, s}\mathit{a}_{1, r} \hspace{4.9cm} 1\leq r \leq 2g \ 1\leq s \leq 2g-1, \ r\neq s$;
\item[{\rm(R5)}] $(\mathit{a}_{1,1}\cdots\mathit{a}_{1, r})A_{2, r}=
\sigma_{1}^{2}A_{2, r}(\mathit{a}_{1, 1}\cdots\mathit{a}_{1, r}) \hspace{4.7cm} 1\leq r \leq 2g-1$;
\item[{\rm(R6)}] $\mathit{a}_{1, r}\sigma_{i}= \sigma_{i}\mathit{a}_{1, r} \hspace{8.3cm} 1\leq r \leq 2g; \  i\geq 2$;
\end{itemize}
\end{itemize}where:
\begin{eqnarray*} 
t_{1,j}&=& \sigma_{1}\cdots\sigma_{j-2}\sigma_{j-1}^{2}\sigma_{j-2}^{-1} \cdots \sigma_{1}^{-1}, \hspace{0.3cm} j=2,\ldots,n, \\ 
A_{2,s}&=&\sigma_{1}^{-1}(a_{1,1}\cdots a_{1,s-1}a_{1,s+1}^{-1}\cdots a_{1,2g}^{-1})\sigma_{1}^{-1},  \hspace{0.3cm} s=1,\ldots,2g-1.
\end{eqnarray*}
\end{coro}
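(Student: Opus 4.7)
The plan is to mirror the proof of Theorem \ref{Teorema4.4.1} almost verbatim, with the key observation that puncturing a closed surface removes exactly one relation on both sides of the equivalence: relation (R3) disappears from the presentation of $B_n(S)$ (which is the González--Meneses presentation of the braid group on a once-punctured surface), and correspondingly relation (PR1) is absent from the presentation of $\widehat{PB}_n(S)$ given in Theorem \ref{Teorema2.6}. First I would establish the analogue of Proposition \ref{Prop4.2.7} for $S$: the permutation map $\psi : \widehat{B}_n(S) \to \Sigma_n$ is well defined for exactly the same reason as before, yielding the short exact sequence
$$\xymatrix{1 \ar[r] & \widehat{PB}_n(S) \ar[r]^{i} & \widehat{B}_n(S) \ar[r]^{\psi} & \Sigma_n \ar[r] & 1.}$$
The supporting results Lemma \ref{lema4.2.1}, Theorem \ref{Teorema4.2.2}, and Theorem \ref{Teorema4.2.6} all carry over without modification, since their proofs use only that $S \times I$ permits the required link-homotopy deformations, not closedness of the underlying surface.

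Next I would denote by $\mathcal{B}_n'$ the abstract group defined by the presentation in the statement, augment the generators with the auxiliary symbols $\{a_{i,r}\}_{2\le i\le n}$ and $\{t_{j,k}\}_{1\le j<k\le n}$ together with the relations (R7), (R8), (R9) exactly as before, and define $\varphi : \mathcal{B}_n' \to \widehat{B}_n(S)$ on generators in the natural way. To check well-definedness, I would invoke the presentation of $B_n(S)$ for the punctured surface (obtained from Theorem \ref{Teorema1.1.2} by removing (R3)) to see that (R1), (R2), (R4)--(R9) hold in $B_n(S)$, hence in $\widehat{B}_n(S)$, and I would note that (LH) is the image under the inclusion $i$ of a particular case of (LH1) from Theorem \ref{Teorema2.6}.

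To prove surjectivity I would apply Proposition \ref{Prop1.3.1} to the above short exact sequence: the generators from Theorem \ref{Teorema2.6} together with lifts $\sigma_i$ of the transpositions $\delta_i \in \Sigma_n$ give a generating set for $\widehat{B}_n(S)$, and the auxiliary relations (R7)--(R9) reduce this set to $\{a_{1,r}\} \cup \{\sigma_i\}$. For injectivity, I would verify that each of the three types of relations dictated by Proposition \ref{Prop1.3.1} follows from the relations of $\mathcal{B}_n'$. The Type~1 relations are (LH1) and (PR2)--(PR8) of Theorem \ref{Teorema2.6}; for (LH1) the same computation using $\alpha = \sigma_1 \cdots \sigma_{i-1}$ displayed in the proof of Theorem \ref{Teorema4.4.1} applies word-for-word, and for (PR2)--(PR8) the derivations are the ones used by González--Meneses. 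The Type~2 relations coming from (SR1), (SR2), (SR3) reduce to (R1), (R2), and $\sigma_i^2 = t_{i,i+1}$ respectively, the last being a direct consequence of (R9); the Type~3 conjugation relations are covered by (R6), (R7), (R8).

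The main obstacle is the final bookkeeping step: tracing the González--Meneses derivations to confirm that (PR2)--(PR8) can be deduced from (R1), (R2), (R4)--(R9) \emph{without} ever invoking (R3). This is essentially what Theorem \ref{Teorema2.6} already encodes --- it is precisely the fact that (PR1), the pure-braid relation whose derivation requires (R3), is the only one of the pure-braid relations that disappears when the surface is punctured. Once this parallel between the two presentations is made explicit, all other details of the proof are identical to those in Theorem \ref{Teorema4.4.1} and can be found in \cite{Gonzales}.
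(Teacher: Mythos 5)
Your proposal is correct and follows essentially the same route as the paper, whose entire proof is the one-line remark that the argument of Theorem \ref{Teorema4.4.1} goes through with the presentation of Theorem \ref{Teorema2.6} substituted for that of Theorem \ref{Teorema2.1.5}. Your expansion — noting that puncturing removes (R3) on the braid side and (PR1) on the pure string-link side, and that the remaining verifications are unchanged — simply makes explicit the steps the paper leaves implicit.
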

\begin{proof} 
The proof follows the steps of Theorem \ref{Teorema4.4.1}, using the presentation of Theorem \ref{Teorema2.6}.
\end{proof}

\vspace{0.5cm}

With a view to understanding how this group structure fits into the literature, consider the following exact sequence that appears as a consequence of Theorem \ref{Teorema4.2.6}:
$$\xymatrix{
                      1 \ar[r]   &   H_{n}(M) \ar[r]^{i}   &   {B}_{n}(M) \ar[r]^{\hspace{0.2cm}\varphi}   &    \ar[r]   \widehat{B}_{n}(M) &   1 \hspace{1.0cm}  (\star \star)
},$$ where $i$ is the inclusion and $\widehat{B}_{n}(M)$ is the natural projection given in the proof of Theorem \ref{Teorema4.2.6}.

According to \cite[Lemma $3.1$, p. 269]{Rolfsen}, if $H_{n}(M)$ and $ \widehat{B}_{n}(M)$ are left-orderable groups, then so is ${B}_{n}(M)$. $H_{n}(M)$ is left-orderable since it is contained in the bi-orderable group $PB_{n}(M)$  \cite{Gonzales}, hence the search for the left-orderability of $\widehat{B}_{n}(M)$ becomes an attractive question to pursue.
Besides this question, let us consider other ones:
\begin{itemize}
\item[-] about the torsion-freeness of $\widehat{{B}}_{n}(M)$, for all $n \geq 2$. When $M$ is the disk, the torsion-freeness of $\widehat{{B}}_{n}$ is still an open question, for all $n \geq 6$ (for the torsion-freeness of $\widehat{{B}}_{n}$, for $n= 2,3,4$ and $5$, see \cite{Humphries}).
\item[-] about a possible faithful representation theorem for $\widehat{{B}}_{n}(M)$ and $\widehat{{B}}_{n}$ along the lines of Artin's representation for braids on the disk in \cite{Artin} and for braids on surfaces in \cite{Bellingeri2};
\item[-] about a possible solution for the word problem of $\widehat{{B}}_{n}(M)$ and $\widehat{{B}}_{n}$. 
\end{itemize}

\vspace{0.5cm}

\noindent{\bf{Acknowledgment:}} I would like to thank Professor Dale Rolfsen, who encouraged me to write this paper during the period he was my Phd co-advisor at UBC, Vancouver, Canada.

\small
\printindex
\normalsize

\end{document}